\documentclass[reqno,12pt]{amsart}

\usepackage[a4paper,margin=1in]{geometry}
\usepackage[T1]{fontenc}
\usepackage{lmodern}
\usepackage{microtype}
\usepackage{amsmath,amssymb,amsthm,mathtools}
\usepackage{enumitem}
\usepackage{xcolor}
\usepackage{hyperref}

\definecolor{navy}{HTML}{173B57}
\definecolor{linkblue}{HTML}{245B7A}
\hypersetup{
  colorlinks=true,
  linkcolor=linkblue,
  citecolor=linkblue,
  urlcolor=linkblue,
  pdftitle={A Chain-Level Borsuk--Ulam Proof of Norine's Antipodal-Coloring Conjecture},
  pdfsubject={A chain-level Borsuk--Ulam proof of the monochromatic antipodal path theorem}
}

\newtheorem{theorem}{Theorem}[section]
\newtheorem{lemma}[theorem]{Lemma}
\newtheorem{proposition}[theorem]{Proposition}
\newtheorem{corollary}[theorem]{Corollary}

\theoremstyle{definition}
\newtheorem{definition}[theorem]{Definition}

\DeclareMathOperator{\conv}{conv}
\DeclareMathOperator{\pos}{pos}
\DeclareMathOperator{\relint}{relint}
\DeclareMathOperator{\rank}{rank}
\newcommand{\F}{\mathbb F_2}
\newcommand{\id}{\mathrm{id}}
\newcommand{\cell}{\mathrm{cell}}

\begin{document}

\title[]{A Chain-Level Borsuk--Ulam Obstruction Proof of Norine's Antipodal-Coloring Conjecture}

\author[]{Hehui Wu and Ningyuan Yang}

    \address[Hehui Wu]{Shanghai Center for Mathematical Sciences, Fudan University, Shanghai, China}
	\email{hhwu@fudan.edu.cn}

    \address[Ningyuan Yang]{School of Mathematical Sciences, Fudan University, Shanghai, China, and Extremal Combinatorics and Probability Group (ECOPRO), Institute for Basic Science (IBS), Daejeon, South Korea.}
	\email{nyyang23@m.fudan.edu.cn}

\begin{abstract}
We prove Norine's conjecture: every red--blue edge-coloring of the
\(n\)-dimensional hypercube \(Q_n\), \(n\geq2\), in which antipodal
edges have opposite colors contains a monochromatic path joining some
vertex to its antipode.  From a hypothetical counterexample we
construct an antipodally equivariant, augmentation-preserving chain
map from the cellular chains of the cubical boundary of a cube to
subdivision-invariant polyhedral chains on a sphere of one lower
dimension.  A purely algebraic chain-level Borsuk--Ulam obstruction
rules out this map.
\end{abstract}

\maketitle

\section{Introduction}\label{sec:introduction}

For \(n\geq0\), let \(Q_n\) be the graph on \(\{0,1\}^n\), with two
vertices adjacent when they differ in exactly one coordinate.  The map
\[
 A_n(x_1,\ldots,x_n)=(1-x_1,\ldots,1-x_n)
\]
is the antipodal involution of \(Q_n\).  Thus \(x\) and \(A_nx\) are
antipodal vertices, and an edge \(xy\) is antipodal to
\((A_nx)(A_ny)\).

A red--blue edge-coloring of \(Q_n\) is \emph{antipodal} if every pair
of antipodal edges receives opposite colors.  In 2008, Norine asked
whether every such coloring contains a monochromatic path between a
pair of antipodal vertices \cite{Norine}.  We prove that it does.

\begin{theorem}[Norine's conjecture]\label{thm:main}
For every \(n\geq2\), every antipodal red--blue edge-coloring of
\(Q_n\) contains a monochromatic path joining some vertex \(x\) to
its antipode \(A_nx\).
\end{theorem}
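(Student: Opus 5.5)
We argue by contradiction, following the strategy announced in the abstract. Fix \(n\geq2\) and an antipodal red--blue colouring of \(Q_n\) with no monochromatic path from any vertex \(x\) to \(A_nx\). For each vertex \(x\) let \(R(x)\) and \(B(x)\) be the vertex sets of the red and blue components of \(x\). The hypothesis says \(A_nx\notin R(x)\) and \(A_nx\notin B(x)\). The antipodal condition gives the equivariance \(A_nR(x)=B(A_nx)\), since \(A_n\) maps red edges to blue edges and conversely.

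The first step converts these components into geometry on the sphere \(S^{n-2}\). The natural target is one dimension lower than the cubical boundary \(\partial[0,1]^n\cong S^{n-1}\). For each vertex \(x\) I would form a vector \(v(x)\in\mathbb R^n\) recording a signed difference between the red and the blue component of \(x\); for example, sum over the component the indicator vectors \(\pm(2y-\mathbf 1)\), or take the difference of centroids. The required properties are \(v(A_nx)=-v(x)\), \(v(x)\neq0\), and, crucially, that \(v(x)\) is orthogonal or otherwise constrained to a fixed hyperplane, so that it really lives in \(S^{n-2}\). The reduction to codimension one has to come from the structure of the components. A natural candidate is the sum of colours around each \(2\)-face, or a parity or linear invariant forced by the absence of an antipodal monochromatic path.

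The second step extends this to a chain map. Let \(C_*\) be the cellular chains over \(\F\) of \(\partial[0,1]^n\); its cells are the proper faces of the cube, and \(A_n\) acts freely on it. Let \(P_*\) be polyhedral chains on \(S^{n-2}\), taken modulo subdivision. I would send a face \(F\) to the spherical polyhedral chain spanned by the cone \(\pos\{v(x):x\in F\}\), intersected with the sphere. I would then check that this is compatible with boundaries, that it commutes with the antipodal maps, and that it preserves augmentation: each vertex goes to a single point. The no-path hypothesis is used here. For every face \(F\), the vectors \(v(x)\), \(x\in F\), must lie in an open hemisphere, or at least must never contain an antipodal pair in their positive hull. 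Otherwise the cone is degenerate. To prove this, I would show that if \(0\in\conv\{v(x):x\in F\}\), then one could splice red and blue paths inside \(F\) into a monochromatic antipodal path.

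The third step is the algebraic Borsuk--Ulam obstruction. Any \(\F\)-chain map \(f\colon C_*(S^{n-1})\to C_*(S^{n-2})\) that is \(\mathbb Z/2\)-equivariant for free antipodal actions and preserves augmentation is impossible. The standard argument uses equivariant homological algebra. An equivariant augmentation-preserving chain map induces a map of quotient complexes that is an isomorphism on \(H_0\) and respects the Smith (transfer) exact sequences. Inductively, the class \(w^k\), where \(w\) is the characteristic class in \(H^1(\mathbb RP^{m};\F)\), pulls back compatibly. Since \(w^{n-1}\neq0\) on \(\mathbb RP^{n-1}\) but \(w^{n-1}=0\) on \(\mathbb RP^{n-2}\), this gives the contradiction. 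Concretely, I would construct equivariant chain homotopies degree by degree: choose an equivariant contraction of the target in degrees below \(n-2\), and track an obstruction cycle that must be the fundamental class in top degree.

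The main obstacle is the second step: showing that the no-monochromatic-path hypothesis makes the positive-hull construction nondegenerate on every face, and that the resulting assignment commutes with boundaries, as a chain map, modulo subdivision.
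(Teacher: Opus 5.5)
There is a genuine gap. The two steps that carry all the content are never constructed, and the mechanisms you sketch for them would not work.

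\textbf{First step.} You never define $v(x)$. Your candidates (signed sums of $2y-\mathbf 1$, centroid differences, colour sums on $2$-faces) come with no reason to be nonzero or to lie in a fixed hyperplane. More importantly, they satisfy no relation between adjacent vertices, and that relation is what the later steps actually need. The missing idea is to use red components only. If $p(x)$ indexes the red component of $x$, the label $q(x)=(p(x),p(A_nx))$ has distinct entries and satisfies $q(A_nx)=\tau q(x)$. Along every edge it keeps its first entry (red edge) or its second entry (blue edge, whose antipodal edge is red). The root $e_{p(x)}-e_{p(A_nx)}$ is then automatically nonzero, odd, and in the sum-zero hyperplane. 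So the target dimension is governed by the number $r$ of red components, not by $n$, and aiming at $S^{n-2}\subset\mathbb R^n$ is the wrong bookkeeping. The paper pads the cube to $Q_K$ with $K\geq\max\{n,r\}$, keeping labels constant in the new coordinates. The source is then $\partial I^K\cong S^{K-1}$ and the target is $S(W_K)\cong S^{K-2}$.

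\textbf{Second step.} Sending a face $F$ to the spherical section of $\pos\{v(x):x\in F\}$ does not give a chain map, for two reasons. First, the cone on the $2^j$ vertices of a $j$-face can have dimension exceeding $j+1$. With root labels, a square labelled $(1,2),(1,3),(1,4),(1,5)$, all edges red, already spans a $4$-dimensional cone. Second, even when the dimension is right the boundaries disagree. If one vertex image of a square lies inside the cone of the other three, the boundary of the resulting spherical triangle differs from the sum of the four edge arcs by a nonzero cycle.

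You must triangulate. The paper sends $F$ to the mod-$2$ sum of its maximal Freudenthal simplices. Each such simplex is a monotone path, so its labels form a rook gallery. Each simplex goes to its radial spherical simplex, or to zero when its roots are dependent. The paper then proves that the radial boundary of a dependent but pointed family cancels modulo subdivision.

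Above all, your plan to exclude $0\in\conv\{v(x)\}$ by splicing paths cannot work. Such configurations are compatible with the edge condition: take a monotone path labelled $(1,2),(1,3),(2,3),(2,1)$, with edges red, blue, red. Moreover, if you could exclude them on every simplex, the normalized piecewise-linear extension would already be a continuous odd map $S^{n-1}\to S^{n-2}$. So this situation is exactly what the chain-level argument exists to absorb, not something to be ruled out. The paper instead proves the rook-gallery rank lemma. If $0$ lies in the convex hull of the roots of a gallery of length $m$, their rank is at most $m-2$. Hence the simplex and all its facets are sent to zero, and the boundary identity holds trivially.

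\textbf{Third step.} This is fine in outline. However, the subdivision-invariant target has no freely permuted basis. What is actually needed is $\ker(\id+U)=\operatorname{im}(\id+U)$, which the paper derives from antipodally invariant common refinements.
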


Theorem~\ref{thm:main} settles a problem that had remained open since
2008 despite substantial progress in low dimensions.  Feder and Subi
proved the stronger monochromatic geodesic statement for \(n\leq5\).
Dimension \(6\) was first verified computationally and later proved by
hand by West and Wise, while SAT methods subsequently reached
dimensions \(7\) and \(8\)
\cite{FederSubi,ZulkoskiEtAl,WestWise,FrankstonScheinerman,
KirchwegerEtAl}.  Thus, before the present work, all exact results were
confined to particular dimensions.  Theorem~\ref{thm:main} confirms
that the phenomenon persists throughout the entire sequence of
hypercubes.

Topological methods have a well-established place in combinatorics.  A
landmark is Lov\'asz's proof of Kneser's conjecture, which converts a
graph-coloring problem into a Borsuk--Ulam-type obstruction
\cite{Lovasz}; Borsuk--Ulam-type methods have since become a central
part of topological combinatorics \cite{Bjorner,Matousek}.  The
discrete cube problem has recently acquired a related topological
picture.  D\v{z}avoronok obtained a criterion for certain
antipodal colorings by working with centrally symmetric, simply
connected \(2\)-complexes \cite{Dzavoronok}.  In a continuous setting,
Ellis, Ivan, Leader, and Mackay recently proved a sharp theorem about
antipodal connectivity in covers of spheres \cite{EllisEtAl}.  If
\(S^m\) is covered by \(k\) open sets and \(m\geq2k-2\), then one of
the sets contains a path with antipodal endpoints; this can fail when
\(m<2k-2\).  This is a spherical covering analogue of the cube
problem, rather than a direct reformulation of the condition that
antipodal edges have opposite colors.  Nevertheless, both results
show that requiring a path, rather than merely a pair of antipodal
points, still leads to a dimension-sensitive topological obstruction.

Our proof makes this connection explicit for the discrete cube.  A
hypothetical counterexample determines a labeling that records the red
component containing each vertex and the red component containing its
antipode.  From this labeling we construct an antipodally equivariant,
augmentation-preserving chain map from the cellular chain complex of
the cubical boundary of a cube to the subdivision-invariant
polyhedral chain complex on a sphere of one lower dimension.  The
classical
Borsuk--Ulam theorem \cite{Borsuk,Matousek} rules out an antipodal
continuous map between such spheres.  Our construction need not arise
from a continuous map, so the contradiction is instead supplied by a
purely algebraic chain-level Borsuk--Ulam obstruction.  Thus the
obstruction is extracted directly from the monochromatic component
structure; no induction on the dimension or finite search is involved.

Several stronger versions remain open.  One may ask whether the
monochromatic path in Theorem~\ref{thm:main} can always be chosen to be
geodesic.  Feder and Subi proposed a different strengthening: in an
arbitrary red--blue coloring, with no antipodality assumption, there
should be an antipodal path with at most one color change
\cite{FederSubi}.  Requiring this path to be geodesic gives a third
conjecture.  The two geodesic conjectures are equivalent as statements
over all dimensions, although one implication passes to a cube of one
higher dimension \cite{LeaderLong,Soltesz}.

A quantitative relaxation asks how many color changes are always
sufficient in an arbitrary coloring.  Leader and Long asked whether
the answer is \(o(n)\) \cite{LeaderLong}.  After earlier linear bounds
\cite{Dvorak,KirchwegerEtAl}, Hollom proved that every red--blue
coloring has an antipodal geodesic with at most
\[
 \left(\frac{\pi}{2}+o(1)\right)\sqrt n
\]
color changes \cite{Hollom}.

\subsection{Proof strategy and organization}

The proof begins by reducing Theorem~\ref{thm:main} to the diagonal
rook theorem, which rules out an antipodally symmetric rook labeling
of the cube.  We associate a label \((a,b)\) with the root
\(e_a-e_b\) in
\[
 W_K=\left\{z\in\mathbb R^K:\sum_{i=1}^Kz_i=0\right\}.
\]
Swapping the two entries negates the root, so the antipodal rule for
the labeling becomes the usual antipodal symmetry on
\(S(W_K)\cong S^{K-2}\).

We pass from cubical cellular chains to simplicial chains by the
Freudenthal subdivision.  For each cubical face \(F\), let
\(\mathfrak F(F)\) be the mod--\(2\) sum of its maximal Freudenthal
simplices.  Internal simplicial facets occur twice and cancel, so
\(\mathfrak F\) is a chain map.  Moreover, the vertices of every
maximal simplex occurring in \(\mathfrak F(F)\) form a monotone cube
path, and hence their labels form a rook gallery.

For such a simplex with root labels \(v_0,\ldots,v_j\), we take the
spherical section of their positive cone when the roots are linearly
independent, and zero when they are dependent.  These assignments
define a linear map \(\rho\) to a polyhedral chain complex modulo
subdivision.  A mod--\(2\) degenerate radial cancellation identity and
the rook-gallery rank bound show that the composite
\[
 C_*^{\cell}(\partial_{\mathrm{top}}I^K;\F)
 \xrightarrow{\ \mathfrak F_*\ }
 C_*^{\mathrm{simp}}(\mathcal T_K;\F)
 \xrightarrow{\ \rho_*\ }
 C_*^{\mathrm{poly}}(S(W_K);\F)
\]
is an antipodally equivariant, augmentation-preserving chain map.
Here
\[
 \partial_{\mathrm{top}}I^K\cong S^{K-1},
 \qquad
 S(W_K)\cong S^{K-2}.
\]
The target has no chains above dimension \(K-2\), and its antipodal
norm operator has equal kernel and image in every degree.  The
chain-level Borsuk--Ulam obstruction therefore rules out the
composite.

Section~2 carries out the reduction to a rook labeling.  Section~3
establishes the algebraic chain-level Borsuk--Ulam obstruction.
Section~4 develops the polyhedral chains on the target sphere and
proves the required kernel--image identity for the antipodal norm
operator.  Section~5 combines cubical cellular chains, Freudenthal
gallery chains, and radial polyhedral chains to construct the
forbidden chain map and complete the proof.

\section{Reduction to a rook labeling}

For \(r\geq1\), write
\[
 [r]=\{1,\ldots,r\},\qquad
 \Omega_r=\{(a,b)\in[r]^2:a\ne b\},
 \qquad \tau(a,b)=(b,a).
\]

\begin{definition}
A \emph{rook labeling} of an \(N\)-dimensional cube with label set
\([r]\) is a map \(q:V(Q_N)\to\Omega_r\) such that
\[
 q(A_Nx)=\tau q(x)
\]
and, for every cube edge \(xy\), the labels \(q(x)\) and \(q(y)\)
have the same first coordinate or the same second coordinate.
\end{definition}

\begin{lemma}\label{lem:reduction}
Suppose that an antipodal red--blue coloring of \(Q_n\) contains no
monochromatic antipodal path, and let \(r\) be the number of connected
components of its red spanning subgraph.  For every
\[
 K\geq\max\{n,r\},
\]
there is a rook labeling
\[
 q:V(Q_K)\longrightarrow\Omega_K.
\]
\end{lemma}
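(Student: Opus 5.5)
Let \(R_1,\ldots,R_r\) be the connected components of the red spanning subgraph of \(Q_n\). For a vertex \(x\), let \(c(x)\in[r]\) be the index of the red component containing \(x\), and set
\[
 q_0(x)=(c(x),\,c(A_nx)).
\]

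\textbf{Step 1: \(q_0\) is a rook labeling into \(\Omega_r\).} Three things need checking.

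1. The pair \(q_0(x)\) has distinct entries, since \(c(x)\neq c(A_nx)\) because there is no red antipodal path.
2. Antipodal symmetry holds on the nose: \(q_0(A_nx)=(c(A_nx),c(x))=\tau q_0(x)\), because \(A_n\) is an involution.
3. Adjacent labels share a coordinate. Let \(xy\) be an edge. If it is red, then \(c(x)=c(y)\), so the first coordinates agree. If it is blue, the antipodal edge \((A_nx)(A_ny)\) is red by the coloring hypothesis, so \(c(A_nx)=c(A_ny)\), and the second coordinates agree.

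Note that the absence of blue antipodal paths is not even needed. Composing with the inclusion \([r]\subseteq[K]\) gives a rook labeling \(V(Q_n)\to\Omega_K\), since inclusion preserves distinctness, coordinate equalities and \(\tau\).

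\textbf{Step 2: lift from \(Q_n\) to \(Q_K\) for \(K\geq n\).} Use the coordinate projection
\[
 \pi:\{0,1\}^K\to\{0,1\}^n,\qquad (x_1,\ldots,x_K)\mapsto(x_1,\ldots,x_n),
\]
and define \(q=q_0\circ\pi\). This works because:

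1. \(\pi\) commutes with the antipodal maps, \(\pi A_K=A_n\pi\), so \(q(A_Kx)=\tau q(x)\).
2. A cube edge of \(Q_K\) either changes a coordinate \(\le n\), in which case it maps to an edge of \(Q_n\) and the rook condition follows from Step 1, or it changes a coordinate \(>n\), in which case both endpoints get the identical label, which trivially shares a coordinate.

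The condition \(K\geq r\) is used only to allow label set \([K]\), and \(K\geq n\) only for the projection.

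No step is a real obstacle. The only point needing care is the blue-edge case in Step 1, where one must invoke antipodality of the coloring to turn a blue edge into a red antipodal edge.
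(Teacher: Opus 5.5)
Your proposal is correct and follows the paper's proof essentially step for step. Both use the red-component labeling \(q_0(x)=(p(x),p(A_nx))\), split the rook condition into a red-edge case and a blue-edge case (the latter via the red antipodal edge), embed \([r]\hookrightarrow[K]\), and pull back along the projection \(Q_K=Q_n\times Q_{K-n}\).
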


\begin{proof}
Let \(G_{\mathrm{red}}\) be the spanning subgraph of \(Q_n\) whose edge
set consists of the red edges, and let \(C_1,\ldots,C_r\) be its
connected components; an isolated vertex is also a component.  Let
\(p(x)=i\) when \(x\in C_i\), and define
\[
 q_0(x)=\bigl(p(x),p(A_nx)\bigr).
\]
The two entries are distinct, since otherwise one red component would
contain \(x\) and \(A_nx\).  Also \(q_0(A_nx)=\tau q_0(x)\).

If \(xy\) is red, then \(p(x)=p(y)\).  If \(xy\) is blue, its antipodal
edge is red, so \(p(A_nx)=p(A_ny)\).  Thus \(q_0\) is a rook labeling.

Choose an injection \(\iota:[r]\hookrightarrow[K]\).  Using
\(Q_K=Q_n\times Q_{K-n}\), set
\[
 q(x,z)=\bigl(\iota(p(x)),\iota(p(A_nx))\bigr).
\]
Old-coordinate edges inherit the rook property from \(q_0\), while the
label is constant along every new-coordinate edge.  Moreover,
\[
 q(A_nx,A_{K-n}z)
 =\bigl(\iota(p(A_nx)),\iota(p(x))\bigr)
 =\tau q(x,z).
\]
Hence \(q\) is the required labeling.
\end{proof}

Theorem~\ref{thm:main} therefore reduces to the following statement.

\begin{theorem}\label{thm:diagonal-rook}
For every \(K\geq2\), there is no rook labeling
\(Q_K\to\Omega_K\).
\end{theorem}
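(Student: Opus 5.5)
We argue by contradiction, following the strategy outlined in the introduction. Suppose \(q:V(Q_K)\to\Omega_K\) is a rook labeling. We send each label \((a,b)\) to the root \(v(a,b)=e_a-e_b\in W_K\). Then \(\tau\) corresponds to negation, so \(v(q(A_Kx))=-v(q(x))\). We regard \(Q_K\) as the vertex set of the cube \(I^K\), and \(\partial_{\mathrm{top}}I^K\cong S^{K-1}\) as a free \(\mathbb Z/2\)-cell complex under \(A_K\).

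\emph{Step 1 (Freudenthal chain map).} For each cubical face \(F\), we define \(\mathfrak F(F)\) to be the mod-\(2\) sum of the maximal simplices of the Freudenthal (staircase) triangulation of \(F\). These simplices are indexed by orderings of the free coordinates. Every interior codimension-one simplex lies in exactly two maximal simplices, and the boundary simplices assemble into the Freudenthal subdivisions of the facets of \(F\). Hence \(\partial\mathfrak F=\mathfrak F\partial\) over \(\F\). The antipodal map carries the triangulation to itself with the orderings reversed, so \(\mathfrak F\) is equivariant, and it preserves augmentation on vertices. Each maximal simplex has vertices \(x_0<x_1<\dots<x_j\) along a monotone path. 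Consecutive labels share a row or a column, so the labels form a rook gallery.

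\emph{Step 2 (radial map).} For a simplex with root labels \(v_0,\dots,v_j\), we set \(\rho(\sigma)\) equal to the spherical polytope \(\pos(v_0,\dots,v_j)\cap S(W_K)\) when the roots are linearly independent, and equal to \(0\) otherwise. The value is taken in the complex \(C^{\mathrm{poly}}_*(S(W_K);\F)\) of polyhedral chains modulo subdivision. Equivariance follows because \(\pos(-v_i)=-\pos(v_i)\). The key check is that \(\rho\partial=\partial\rho\) on the image of \(\mathfrak F\), which we would establish in two cases.

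\begin{enumerate}[label=(\roman*)]
\item If the roots are independent, the facets of the cone are exactly the faces dropping one root, and the identity holds.
\item If the roots are dependent, we need \(\sum_i\rho(\partial_i\sigma)=0\) mod \(2\).
\end{enumerate}

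Case (ii) is the main obstacle. We would handle it with a degenerate radial cancellation: if \(v_0,\dots,v_j\) span a space of dimension exactly \(j\) and the unique linear relation has support \(S\), then the cones \(\pos(v_i:i\ne k)\) for \(k\in S\) cover the common span an even number of times mod \(2\) (a Radon/Carath\'eodory parity). The remaining terms \(k\notin S\) are themselves dependent and vanish. For this case analysis to suffice, we need the rook-gallery rank bound: along a rook gallery, the rank drops by at most one. Roots in a rook gallery form a walk in a graph on \([K]\), and \(e_a-e_b\) vectors are dependent exactly when they contain a cycle, so after dropping one vertex the rank is at least \(\rank-1\) and the relations are cycle relations. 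We would also need to handle cones containing antipodal pairs, which pass through the full subspace. Orientation issues vanish mod \(2\).

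\emph{Step 3 (Borsuk--Ulam obstruction).} The composite \(\rho\mathfrak F\) is an equivariant, augmentation-preserving chain map from the free \(\mathbb Z/2\)-complex of \(S^{K-1}\) to \(C^{\mathrm{poly}}_*(S(W_K))\). That target vanishes above degree \(K-2\), and its norm \(1+A\) satisfies \(\ker=\operatorname{im}\) in every degree (Section~4). We would prove the algebraic obstruction inductively. Choose cells \(c_0,\dots,c_{K-1}\) in the source with \(\partial c_{i}=(1+A)c_{i-1}\) and \(\epsilon(c_0)=1\); such cells exist by the standard equivariant cell structure of the sphere, whose top cycle is \((1+A)c_{K-1}\). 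Pushing these forward, we show by induction that \(f(c_i)\) can be modified within \(\ker\) to satisfy the same recursion, using \(\ker(1+A)=\operatorname{im}(1+A)\). In degree \(K-1\) the target is zero, which forces \((1+A)f(c_{K-2})\) to be a boundary of zero. Tracing back through the recursion down to degree \(0\), this contradicts \(\epsilon f(c_0)=1\), since \(\epsilon(1+A)=0\) mod \(2\). This contradiction proves Theorem~\ref{thm:diagonal-rook}.
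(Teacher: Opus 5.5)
Your proposal has a genuine gap in Step~2(ii), which is exactly where the rook edge condition has to do its work. You claim that when $\rank\{v_0,\dots,v_j\}=j$, with unique relation $\sum_{i\in S}c_iv_i=0$, the cones $\pos\{v_i:i\ne k\}$ for $k\in S$ cover the span an even number of times. This is true only when the $c_i$ take both signs, i.e.\ when $0\notin\conv\{v_0,\dots,v_j\}$. Then the two sign classes of $S$ index the two triangulations of the pointed cone $\pos\{v_0,\dots,v_j\}$, and everything cancels; this is what the paper's degenerate radial cancellation establishes. If all $c_i$ have the same sign, the cones form a complete fan on $\operatorname{span}\{v_i:i\in S\}$ crossed with $\pos\{v_i:i\notin S\}$. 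They cover that non-pointed region exactly once, so $\rho(\sigma)=0$ while $\sum_k\rho(\partial_k\sigma)\neq0$. For instance, the labels $(1,2),(2,3),(3,1)$, which do not form a rook gallery, give three $120^\circ$ arcs forming a great circle, whose polyhedral class is nonzero. Your remark that one would ``also need to handle cones containing antipodal pairs'' points at this case, but it cannot be handled. It must instead be shown never to occur on gallery simplices. Some genuine use of the rook condition is unavoidable here: maps $q$ with $q(A_Kx)=\tau q(x)$ alone certainly exist, so an argument in which the edge condition does no real work cannot be complete.

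The rank bound you invoke does not close this gap. The statement that the rank drops by at most one along a rook gallery is false, since labels may repeat and a constant gallery has rank $1$. A corank-$\le1$ statement would be beside the point anyway, because corank $\geq2$ is harmless: all facets are then dependent. What is needed is the reverse inequality: if $(a_i,b_i)_{i=1}^m$ is a rook gallery and $0\in\conv\{e_{a_i}-e_{b_i}\}$, then $\rank\{e_{a_i}-e_{b_i}\}\leq m-2$. The paper proves this by splitting each symbol $s$ into a tail copy $s^+$ and a head copy $s^-$. The gallery condition makes the bipartite graph with edges $a_i^+b_i^-$ connected, so $m\geq v+t-1$, where $v$ is the number of symbols used and $t$ the number used in both roles. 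A nonnegative relation is a flow and hence contains a directed cycle, so $t\geq2$, and the rank is $v-1\leq m-2$. Apply this with $m=j+1$ to a gallery $j$-simplex whose roots have $0$ in their convex hull: the rank is at most $j-1$, so every facet's $j$ roots are dependent and both sides of $\partial\rho(\sigma)=\rho(\partial\sigma)$ vanish (for $j=1$ this case cannot occur at all). The cancellation lemma is then needed only when $0\notin\conv\{v_0,\dots,v_j\}$. A smaller point: in Step~3 the $c_i$ should be chains of the cubical complex, obtained from exactness of its augmented cellular chains through degree $K-2$, not cells of a different equivariant cell structure. With that fix, your descent is the paper's argument writing $y_i=\partial z_{i+1}+(1+A)z_i$.
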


\begin{proof}[Deduction of Theorem~\ref{thm:main}]
If a counterexample existed in \(Q_n\), let \(r\) be the number of
components of its red spanning subgraph and choose
\(K\geq\max\{n,r,2\}\).  Lemma~\ref{lem:reduction} would give a rook
labeling \(Q_K\to\Omega_K\), contrary to
Theorem~\ref{thm:diagonal-rook}.
\end{proof}

\section{An algebraic chain-level Borsuk--Ulam obstruction}

All chain complexes in this section are over \(\F\) and concentrated
in nonnegative degrees.  Thus a
\emph{chain complex} \(C_*\) consists of vector spaces \(C_i\),
\(i\geq0\), and linear maps
\[
 \partial_i^C:C_i\longrightarrow C_{i-1}
\]
such that
\[
 \partial_{i-1}^C\partial_i^C=0
 \qquad(i\geq2),
\]
with \(C_i=0\) for \(i<0\) and \(\partial_0^C=0\).  Elements of
\(C_i\) are called \(i\)-chains.  The identity
\(\partial^C\partial^C=0\) implies
\(\operatorname{im}\partial_{i+1}^C\subseteq\ker\partial_i^C\).
For \(i\geq1\), the complex is \emph{exact in degree \(i\)} when
these two subspaces are equal.

A \emph{chain map}
\[
 \phi:C_*\longrightarrow D_*
\]
is a collection of linear maps \(\phi_i:C_i\to D_i\) satisfying
\[
 \partial_i^D\phi_i=\phi_{i-1}\partial_i^C
 \qquad(i\geq1).
\]

An \emph{augmentation} of \(C_*\) is a linear map
\[
 \epsilon_C:C_0\longrightarrow\F
\]
such that \(\epsilon_C\partial_1^C=0\).  The pair
\((C_*,\epsilon_C)\) is then called an \emph{augmented chain
complex}.  Its augmented sequence begins
\[
 \cdots\longrightarrow C_1
 \xrightarrow{\ \partial_1^C\ }C_0
 \xrightarrow{\ \epsilon_C\ }\F
 \longrightarrow0.
\]
It is exact at \(C_0\) when
\(\ker\epsilon_C=\operatorname{im}\partial_1^C\), and exact at
\(\F\) when \(\operatorname{im}\epsilon_C=\F\).  A chain map
\(\phi:C_*\to D_*\) is \emph{augmentation-preserving} if
\[
 \epsilon_D\phi_0=\epsilon_C.
\]

A \emph{chain involution} on an augmented chain complex is a chain
map \(T:C_*\to C_*\) satisfying
\[
 T_i^2=\id_{C_i}\qquad(i\geq0),
 \qquad
 \epsilon_CT_0=\epsilon_C.
\]
If \(C_*\) and \(D_*\) have chain involutions \(T\) and \(U\), a
chain map \(\phi:C_*\to D_*\) is \emph{equivariant} if
\[
 U_i\phi_i=\phi_iT_i\qquad(i\geq0).
\]

\subsection{An equivariant chain obstruction}

The following algebraic obstruction is the form of the
Borsuk--Ulam principle needed below.  It applies directly to the
polyhedral target complex, without first realizing that complex as a
cellular chain complex.

\begin{lemma}\label{lem:dold-chain}
Let \((C_*,\epsilon_C,T)\) and \((D_*,\epsilon_D,U)\) be augmented
chain complexes with involutions over \(\F\), and let \(d\geq0\).
Assume
\[
 \operatorname{im}\epsilon_C=\F,\qquad
 \ker\epsilon_C=\operatorname{im}\partial_1^C,
\]
\[
 \ker\partial_i^C=\operatorname{im}\partial_{i+1}^C
 \qquad(1\leq i\leq d),
\]
\[
 D_i=0\quad(i>d),
 \qquad
 \ker(\id+U_i)=\operatorname{im}(\id+U_i)\quad(0\leq i\leq d).
\]
Then there is no equivariant augmentation-preserving chain map
\(C_*\to D_*\).
\end{lemma}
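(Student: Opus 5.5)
The approach is the usual ``norm-class'' argument behind Dold's theorem. We build a ladder of chains in \(C_*\) climbing to degree \(d+1\). We push it forward by a hypothetical map \(\phi\). Then we descend the ladder in \(D_*\) using the kernel--image identity for \(\id+U_i\), until the augmentation yields \(1=0\). Throughout, two facts do the work. First, \((\id+T)^2=\id+2T+T^2=0\) over \(\F\), and similarly for \(U\). Second, \(\epsilon_C(\id+T_0)=2\epsilon_C=0\) and \(\epsilon_D(\id+U_0)=0\).

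\emph{Ascending step in \(C_*\).} Since \(\operatorname{im}\epsilon_C=\F\), choose \(c_0\in C_0\) with \(\epsilon_C(c_0)=1\). Then \(\epsilon_C((\id+T_0)c_0)=0\), so by exactness at \(C_0\) there is \(c_1\) with \(\partial_1 c_1=(\id+T_0)c_0\). Inductively, suppose \(\partial_i c_i=(\id+T_{i-1})c_{i-1}\) for some \(1\leq i\leq d\). Because \(T\) is a chain map, \(\partial_i(\id+T_i)c_i=(\id+T_{i-1})\partial_i c_i=(\id+T_{i-1})^2c_{i-1}=0\). Exactness in degree \(i\) then gives \(c_{i+1}\) with \(\partial_{i+1}c_{i+1}=(\id+T_i)c_i\). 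This produces \(c_0,\dots,c_{d+1}\). The case \(d=0\) only uses exactness at \(C_0\).

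\emph{Transport.} Suppose \(\phi\) is an equivariant augmentation-preserving chain map, and set \(x_i=\phi_i(c_i)\). Then \(\epsilon_D(x_0)=1\), and \(\partial_i x_i=(\id+U_{i-1})x_{i-1}\) for \(1\leq i\leq d+1\). Since \(D_{d+1}=0\), we have \(x_{d+1}=0\), hence \((\id+U_d)x_d=0\).

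\emph{Descending step in \(D_*\).} We prove by downward induction on \(i\) from \(d\) to \(0\) that there are \(y_i\in D_i\) with \(x_i=\partial_{i+1}y_{i+1}+(\id+U_i)y_i\), where \(y_{d+1}=0\).

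For \(i=d\), we have \(x_d\in\ker(\id+U_d)=\operatorname{im}(\id+U_d)\), which gives \(y_d\).

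For the inductive step, suppose the identity holds at degree \(i\geq1\). Apply \(\partial_i\) and use \(\partial\partial=0\) and \(\partial U=U\partial\):
\[
 (\id+U_{i-1})x_{i-1}=\partial_i x_i=(\id+U_{i-1})\partial_i y_i .
\]
Hence \(x_{i-1}+\partial_i y_i\in\ker(\id+U_{i-1})=\operatorname{im}(\id+U_{i-1})\), which supplies \(y_{i-1}\).

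At \(i=0\) we obtain \(x_0=\partial_1 y_1+(\id+U_0)y_0\). Applying \(\epsilon_D\) gives \(1=0\), a contradiction.

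The only delicate point is the bookkeeping in the descending induction. It uses that the kernel--image hypothesis holds in every degree \(0\leq i\leq d\), including degree \(0\), which is where \(\epsilon_D U_0=\epsilon_D\) is needed. It also uses that the ascent reaches exactly degree \(d+1\), where \(D\) vanishes. No step requires any geometric input.
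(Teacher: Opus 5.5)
Your proof is correct and is essentially the paper's argument: the same ascending ladder $\partial c_{i+1}=(\id+T)c_i$ up to degree $d+1$, transport by $\phi$ to get $(\id+U_d)x_d=0$, and the same descending decomposition $x_i=\partial y_{i+1}+(\id+U_i)y_i$, which forces $\epsilon_D(x_0)=0$. One small wording point: $\epsilon_D U_0=\epsilon_D$ is used in that final evaluation of $\epsilon_D$ on $(\id+U_0)y_0$, not in the degree-$0$ kernel--image step itself.
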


\begin{proof}
Write
\[
 \nu_C=\id+T,\qquad
 \nu_D=\id+U.
\]
These are degreewise maps. Since the coefficients lie in \(\F\),
\[
 \nu_C^2=\nu_D^2=0,\qquad
 \partial^C\nu_C=\nu_C\partial^C,\qquad
 \partial^D\nu_D=\nu_D\partial^D,\qquad
 \epsilon_C\nu_C=\epsilon_D\nu_D=0.
\]

Choose \(x_0\) with \(\epsilon_C(x_0)=1\). The source assumptions
give chains \(x_i\in C_i\), \(1\le i\le d+1\), satisfying
\[
 \epsilon_C(x_0)=1,\qquad
 \partial^C x_{i+1}=\nu_Cx_i
 \quad(0\le i\le d).
\]
Indeed,
\[
 \epsilon_C(\nu_Cx_0)=0,
\]
so exactness in degree \(0\) gives \(x_1\). Inductively, for
\(i\ge1\),
\[
 \partial^C(\nu_Cx_i)
 =\nu_C\partial^C x_i
 =\nu_C^2x_{i-1}
 =0.
\]
Exactness in degree \(i\) therefore supplies \(x_{i+1}\).

Suppose that \(\phi\) exists and put $y_i=\phi_i(x_i)$. Then
\[
 \partial^D y_{i+1}=\nu_Dy_i,
\]
and $y_{d+1}=0$ because \(D_{d+1}=0\).

Set \(z_{d+1}=0\). Descending from degree \(d\), choose
\(z_i\in D_i\), \(0\le i\le d\), such that
\[
 y_i=\partial^D z_{i+1}+\nu_Dz_i
 \qquad(0\le i\le d).
\]
For \(i=d\), this follows from
\[
 \nu_Dy_d=\partial^D y_{d+1}=0.
\]
If it holds in degree \(i+1\), then
\[
 \nu_D\bigl(y_i+\partial^Dz_{i+1}\bigr)
 =\partial^D\bigl(y_{i+1}+\nu_Dz_{i+1}\bigr)
 =(\partial^D)^2z_{i+2}
 =0,
\]
so the same kernel--image equality supplies \(z_i\). Finally,
\[
 \epsilon_D(y_0)
 =\epsilon_D(\partial^Dz_1)
 +\epsilon_D(\nu_Dz_0)
 =0,
\]
whereas augmentation preservation gives
\[
 \epsilon_D(y_0)
 =\epsilon_C(x_0)
 =1,
\]
a contradiction.
\end{proof}

If $U$ permutes a chosen basis of $D$, then
\[
\ker(\id+U)=\operatorname{im}(\id+U)
\]
if and only if the induced involution has no fixed basis element. Therefore, Lemma~\ref{lem:dold-chain} is a chain-level version of the \(\mathbb Z/2\mathbb Z\)-case of Dold's theorem \cite{Dold}. By the Equivariant Simplicial Approximation Theorem, it is a generalization of the Borsuk--Ulam obstruction to an antipodal map \(S^{d+1}\to S^d\).  We shall use it with the standard cellular chains of the cubical boundary \(\partial_{\mathrm{top}}I^K\) as the source. Their augmentation sends every vertex to \(1\), and their augmented complex is exact below the top dimension by cellular homology \cite[Section~2.2]{Hatcher}.

\section{The target sphere and polyhedral chains}

Fix \(K\geq2\) and put
\[
 W_K=\left\{z\in\mathbb R^K:\sum_{i=1}^Kz_i=0\right\},
 \qquad d=K-2.
\]
Since \(\dim W_K=K-1\), its unit sphere \(S(W_K)\) has dimension
\(d=K-2\), so
\(S(W_K)\cong S^d\).  Let
\(A_{W_K}:S(W_K)\to S(W_K)\) be the antipodal map \(x\mapsto-x\).
We now define a subdivision-invariant polyhedral chain complex on this
sphere and verify the algebraic antipodal property required in
Lemma~\ref{lem:dold-chain}.

\subsection{Spherical polytopes and affine models}

The rook construction produces spherical polytopes that need not meet
face-to-face.  We therefore build a polyhedral chain group in which a
polytope is identified with the sum of the pieces in any subdivision.

For \(j\geq0\), a \emph{spherical \(j\)-polytope} in \(S(W_K)\) is a set
\[
 P=S(W_K)\cap\mathcal C,\qquad \dim\mathcal C=j+1,
\]
where \(\mathcal C\subseteq W_K\) is a polyhedral cone satisfying
\[
 \mathcal C\cap(-\mathcal C)=\{0\}.
\]
By the Minkowski--Weyl theorem \cite[Chapter~1]{Ziegler},
\(\mathcal C\) admits a finite homogeneous half-space representation.
The nonempty faces of \(P\) are the intersections of the sphere with
the nonzero faces of \(\mathcal C\); write \(F\lessdot P\) when \(F\) is a
facet of \(P\).

A finite family of spherical \(j\)-polytopes
\(P_1,\ldots,P_s\) is a
\emph{face-to-face subdivision} of a spherical \(j\)-polytope
\(P=S(W_K)\cap\mathcal C\) if
\[
 P=\bigcup_{\alpha=1}^sP_\alpha,\qquad
 \relint P_\alpha\cap\relint P_\beta=\varnothing
 \quad(\alpha\ne\beta),
\]
and every nonempty \(P_\alpha\cap P_\beta\) is a common face, where
\(\relint\) is the relative interior taken in
\(S(W_K)\cap\operatorname{span}\mathcal C\).

Let $Q=\conv P$. Pointedness implies \(0\notin Q\): a convex representation of \(0\) by points of \(P\) would place some nonzero \(x\) and \(-x\) in \(\mathcal C\).  Let \(q\) be the
unique point of minimum norm in \(Q\), and define
\[
 \ell:W_K\longrightarrow\mathbb R,
 \qquad
 \ell(x)=\frac{\langle q,x\rangle}
                 {\|q\|^2},
 \qquad
 H_\ell=\{x\in W_K:\ell(x)=1\},
 \qquad
 P_\ell=\mathcal C\cap H_\ell.
\]
Then, $\ell$ is a positive function on $W_K$.

\begin{lemma}
\label{lem:spherical-affine-model}
Let \(0\ne\mathcal C\subseteq W_K\) be a pointed polyhedral cone,
and let \(P,\ell,H_\ell,P_\ell\) be the associated objects defined
above.
Then \(P_\ell\) is a compact convex polytope of dimension
\(\dim\mathcal C-1\), and
\[
 \pi_\ell:P\longrightarrow P_\ell,
 \quad x\longmapsto\frac{x}{\ell(x)},
 \qquad
 \kappa_\ell:P_\ell\longrightarrow P,
 \quad y\longmapsto\frac{y}{\|y\|},
\]
are inverse homeomorphisms.  They induce a dimension-preserving
isomorphism of face posets, carry relative interiors to relative
interiors, and identify
face-to-face subdivisions of \(P\) with ordinary face-to-face
polytopal subdivisions of \(P_\ell\).
\end{lemma}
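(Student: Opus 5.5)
The plan starts with the key positivity fact: \(\ell>0\) on \(\mathcal C\setminus\{0\}\). (The paper's phrase ``positive function on \(W_K\)'' should be read this way.) Since \(q\) minimizes the norm over the convex set \(Q=\conv P\), the variational inequality gives \(\langle q,x-q\rangle\ge0\) for all \(x\in Q\). Hence \(\langle q,x\rangle\ge\|q\|^2>0\) for every \(x\in P\). By homogeneity, \(\ell(x)=\|x\|\,\ell(x/\|x\|)\ge\|x\|>0\) for all nonzero \(x\in\mathcal C\). Consequently every ray of \(\mathcal C\) meets \(H_\ell\) in exactly one point, and that point has norm at most \(1\). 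So \(P_\ell\) is bounded. It is an intersection of finitely many half-spaces (the homogeneous representation from Minkowski--Weyl) with the hyperplane \(H_\ell\), hence a compact convex polytope. The maps \(\pi_\ell\) and \(\kappa_\ell\) are well defined and continuous, because \(\ell\) and \(\|\cdot\|\) are nonvanishing on the relevant sets. They are mutually inverse, since both are radial rescalings fixing rays: \(\kappa_\ell\pi_\ell(x)=x\) for \(\|x\|=1\), and \(\pi_\ell\kappa_\ell(y)=y\) for \(\ell(y)=1\).

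The dimension count follows from the identity \(\mathcal C=\pos P_\ell=\{ty:t\ge0,\ y\in P_\ell\}\), so \(\operatorname{span}\mathcal C\) is the linear span of \(P_\ell\). Because \(P_\ell\) lies in the affine hyperplane \(H_\ell\), which misses the origin, its affine dimension is \(\dim\operatorname{span}P_\ell-1=\dim\mathcal C-1\).

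For faces, I will use the standard correspondence between nonzero faces of a pointed cone and nonempty faces of a cross-section by an affine hyperplane positive on \(\mathcal C\setminus\{0\}\). If \(G\) is a nonzero face of \(\mathcal C\), cut out by a supporting linear functional \(h\ge0\) with \(G=\mathcal C\cap\{h=0\}\), then \(G\cap H_\ell\) is the face of \(P_\ell\) cut out by the same \(h\). Conversely, let \(E\) be a face of \(P_\ell\) cut out by an affine functional \(g\) that is \(\ge0\) on \(P_\ell\) and vanishes on \(E\). Homogenize it to the linear functional \(h=g-g(0)\ell+g(0)\ell\), that is, \(h(x)=g(x)-g(0)+g(0)\ell(x)\). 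This agrees with \(g\) on \(H_\ell\) and cuts out \(\pos E\). The two assignments are mutually inverse and preserve inclusion, and the dimension shift is \(-1\) on both sides, by the argument of the previous paragraph applied to faces. Transporting through the homeomorphisms gives a dimension-preserving poset isomorphism between the faces of \(P\) (intersections with the sphere) and the faces of \(P_\ell\). The map sends \(G\cap S(W_K)\) to \(G\cap H_\ell\), and both are images of one another under \(\pi_\ell\) and \(\kappa_\ell\).

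For relative interiors, the homeomorphisms extend radially to the open cone minus the origin, and they restrict to homeomorphisms between \(S(W_K)\cap\operatorname{span}\mathcal C\) and \(H_\ell\cap\operatorname{span}\mathcal C\) near \(P\). The relative interior of the cone section on either side is the complement of the union of proper faces. Since proper faces correspond, relative interiors correspond.

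Finally, a family \(P_\alpha\) in \(P\) is sent by \(\pi_\ell\) to the family \(\pi_\ell(P_\alpha)=(\mathcal C_\alpha)\cap H_\ell\). These images are polytopes by the first part, applied to the subcones \(\mathcal C_\alpha\subseteq\mathcal C\); note that \(\ell\) remains positive on the subcones. Union, disjointness of relative interiors, and the common-face condition are all preserved by the bijection, by the face correspondence just proved. The converse direction runs the same argument through \(\kappa_\ell\).

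The main obstacle I expect is the face-correspondence step, in particular the careful homogenization of supporting functionals. A subtle point is that subpolytopes \(P_\alpha\) have their own minimum-norm functionals \(\ell_\alpha\). The statement, however, is about the single \(\ell\) of \(P\), so I only need positivity of \(\ell\) on each \(\mathcal C_\alpha\), which is inherited.
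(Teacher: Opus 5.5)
Your proposal is correct and follows essentially the same route as the paper. Both arguments use the ray decomposition \(x=\ell(x)\cdot x/\ell(x)\) for the dimension count, homogenize a supporting affine functional of \(P_\ell\) via \(\ell\) to obtain the face bijection, treat relative interiors as complements of proper faces, and apply the maps to the pieces' cones for the subdivision statement. Two small notes: your variational-inequality argument that \(\ell(x)\ge\|x\|>0\) on \(\mathcal C\setminus\{0\}\) usefully supplies, and corrects, the positivity claim that the paper only asserts; and the display \(h=g-g(0)\ell+g(0)\ell\) is garbled and should read \(h(x)=g(x)-g(0)+g(0)\ell(x)\), as your ``that is'' clause already says.
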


\begin{proof}
The displayed maps are well defined, continuous, and mutually inverse.
Since \(P\) is compact, \(P_\ell=\pi_\ell(P)\) is compact; being an
affine section of a polyhedral cone, it is therefore a compact convex
polytope.  Every nonzero \(x\in\mathcal C\) has the unique expression
\[
 x=\ell(x)y,
 \qquad
 y=\frac{x}{\ell(x)}\in P_\ell.
\]
This ray decomposition gives
\(\dim P_\ell=\dim\mathcal C-1\).

If \(\mathcal D\) is a nonzero face of \(\mathcal C\), then
\(\mathcal D\cap H_\ell\) is a face of \(P_\ell\).  Conversely, if
\(E\) is a face of \(P_\ell\), choose an affine functional on
\(H_\ell\) that is nonnegative on \(P_\ell\) and vanishes precisely
on \(E\).  Since \(\ell=1\) on \(H_\ell\), this affine functional is
the restriction of a linear functional \(\alpha\) on \(W_K\).
The ray decomposition shows that
\[
 \{ty:t\geq0,\ y\in E\}=\mathcal C\cap\ker\alpha,
\]
so this set is a face of \(\mathcal C\).  Thus
\(\mathcal D\mapsto\mathcal D\cap H_\ell\)
is a bijection from the nonzero faces of \(\mathcal C\) to the
nonempty faces of \(P_\ell\).  Under this bijection, the face of \(P\)
given by \(S(W_K)\cap\mathcal D\) corresponds to
\(\mathcal D\cap H_\ell\).  Since the relative interior of a face is
obtained by deleting all its proper faces, this correspondence also
preserves relative interiors.  This proves all the face-preservation
assertions.  Applying the same maps simultaneously to the cones of
the pieces proves the assertion about subdivisions.
\end{proof}

A finite collection \(\mathcal K\) of spherical polytopes is a
\emph{spherical polyhedral complex} if it contains every nonempty face
of each member and every nonempty intersection of two members is a
common face.  Distinct members of \(\mathcal K\) have disjoint relative
interiors.  It is a \emph{common refinement} of polytopes
\(P_1,\ldots,P_N\)
if, for every \(r\),
\[
 \{Q\in\mathcal K:Q\subseteq P_r,\ \dim Q=\dim P_r\}
\]
is a face-to-face subdivision of \(P_r\).

\subsection{Subdivision-invariant polyhedral chains}

Let
\[
 \widehat C_j^{\mathrm{poly}}(S(W_K);\F)
 =\F\langle\,\langle P\rangle:
       P\text{ is a spherical \(j\)-polytope}\,\rangle
\]
be the free vector space on symbols \(\langle P\rangle\), and impose
every
subdivision relation
\[
 \langle P\rangle=\sum_{\alpha=1}^s\langle P_\alpha\rangle.
\]
The quotient is denoted
\[
 C_j^{\mathrm{poly}}(S(W_K);\F).
\]
Since \(S(W_K)\) has dimension \(d\),
\[
 C_j^{\mathrm{poly}}(S(W_K);\F)=0\qquad(j>d).
\]
Let
\[
 \varpi_j:\widehat C_j^{\mathrm{poly}}(S(W_K);\F)
 \longrightarrow C_j^{\mathrm{poly}}(S(W_K);\F)
\]
be the quotient map.  For a polytope \(P\), write
\([P]=\varpi_j(\langle P\rangle)\) for its quotient class.

\begin{lemma}
\label{lem:spherical-common-refinement}
Every finite family of spherical polytopes has a finite common
refinement.  If the family is closed under \(P\mapsto-P\), the
refinement may be chosen invariant under the antipodal map.
\end{lemma}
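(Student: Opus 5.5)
We will build the refinement from a central hyperplane arrangement in \(W_K\).  Each member \(P_r=S(W_K)\cap\mathcal C_r\) of the family has a pointed cone \(\mathcal C_r\).  By the Minkowski--Weyl theorem, \(\mathcal C_r\) has a finite homogeneous half-space representation.  Together with finitely many linear equations cutting out \(\operatorname{span}\mathcal C_r\), this gives \(\mathcal C_r\) as the solution set of finitely many homogeneous linear inequalities and equations, i.e.\ by linear functionals \(\alpha_{r,1},\ldots,\alpha_{r,m_r}\) on \(W_K\).

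Collect the hyperplanes \(\ker\alpha_{r,k}\) over all \(r,k\) into a finite arrangement \(\mathcal H\).  Add the hyperplanes \(\ker\beta_1,\ldots,\ker\beta_{K-1}\) for a basis \(\beta_1,\ldots,\beta_{K-1}\) of \(W_K^*\).  The arrangement partitions \(W_K\) into relatively open cones: each cone records a sign vector in \(\{-,0,+\}\) for every functional.  Their closures form a complete polyhedral fan \(\Sigma\), in which faces are obtained by changing some signs to \(0\), and any two closed cones meet in a common face.  Since the \(\beta_i\) form a basis, the only cone on which all functionals vanish is \(\{0\}\).  Hence every nonzero cone of \(\Sigma\) is pointed: if \(x\) and \(-x\) lie in the same closed cone, then \(\beta_i(x)=0\) for all \(i\), so \(x=0\).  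Thus \(\mathcal K=\{S(W_K)\cap\sigma:\sigma\in\Sigma,\ \sigma\ne0\}\) is a spherical polyhedral complex.  The face and intersection properties transfer from \(\Sigma\) because intersecting with the sphere commutes with taking faces and intersections, as recorded in Lemma~\ref{lem:spherical-affine-model}.

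Next we check the refinement property for each \(P_r\).  Since \(\mathcal C_r\) is defined by sign conditions on functionals of \(\mathcal H\), it is a union of cones of \(\Sigma\).  Let \(j=\dim P_r\).  The \((j+1)\)-dimensional cones of \(\Sigma\) contained in \(\mathcal C_r\) cover \(\mathcal C_r\): their union is closed, and every lower-dimensional cone inside \(\mathcal C_r\) is a face of some \((j+1)\)-dimensional cone inside \(\mathcal C_r\).  To see the latter, take a point of \(\relint\mathcal C_r\) near a point of the smaller cone; this point lies in a full-dimensional cell of the restricted arrangement inside \(\operatorname{span}\mathcal C_r\), whose closure contains the smaller cone.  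Distinct maximal cones have disjoint relative interiors, and they meet in common faces because \(\Sigma\) is a fan.  Intersecting with the sphere therefore gives a face-to-face subdivision of \(P_r\).

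For the antipodal statement, note that if the family is closed under \(P\mapsto-P\), the arrangement \(\mathcal H\) may be taken symmetric: \(\ker\alpha=\ker(-\alpha)\), and the functionals defining \(-\mathcal C_r\) are the negatives \(-\alpha_{r,k}\).  Then \(x\mapsto-x\) preserves \(\mathcal H\), sends each cone of \(\Sigma\) to a cone of \(\Sigma\) by flipping all signs, and so makes \(\mathcal K\) antipodally invariant.  In fact every hyperplane arrangement is symmetric under \(x\mapsto -x\), so invariance holds automatically, even without closure of the family.

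The main obstacle is the covering claim in the previous paragraph: that the maximal cells inside \(\mathcal C_r\) cover \(\mathcal C_r\) and meet face-to-face.  We will handle it by restricting the arrangement to the linear span \(\operatorname{span}\mathcal C_r\) and applying the standard fact that the cells of an arrangement form a fan whose maximal cells cover the space.
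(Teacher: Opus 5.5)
Your proof is correct and follows essentially the paper's route: take the fan of the central arrangement cut out by the defining functionals of the \(\mathcal C_r\), and intersect its nonzero cones with the sphere. The differences are minor.

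\begin{enumerate}
\item The paper gets pointedness of every cell from \(\bigcap_s\ker\alpha_{r,s}=\mathcal C_r\cap(-\mathcal C_r)=\{0\}\). Your auxiliary basis \(\beta_i\) is therefore harmless but unnecessary.
\item The paper keeps only the cones lying in some \(\mathcal C_r\), which is why it needs the family to be closed under negation. Keeping the whole fan, as you do, makes antipodal invariance automatic.
\item Your explicit argument that the \((j+1)\)-dimensional cells inside \(\mathcal C_r\) already cover \(\mathcal C_r\) fills in a step the paper leaves implicit.
\end{enumerate}
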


\begin{proof}
The empty family requires no subdivision.  Otherwise, write
\(P_r=S(W_K)\cap\mathcal C_r\).  For each \(r\), choose a finite homogeneous
half-space representation by nonzero linear functionals,
\[
 \mathcal C_r=\{x\in W_K:\alpha_{r,s}(x)\geq0,\ 1\leq s\leq m_r\};
\]
an equality may be represented by the corresponding pair of opposite
inequalities.  Since \(\mathcal C_r\) is pointed,
\[
 \bigcap_s\ker\alpha_{r,s}
 =\mathcal C_r\cap(-\mathcal C_r)=\{0\}.
\]
In particular, the common intersection of all the hyperplanes
\(\ker\alpha_{r,s}\) is \(\{0\}\).  Their closed regions and all their
faces therefore form a finite polyhedral fan \(\Sigma\) of pointed
cones: if both \(x\) and \(-x\) belong to one region, then every
\(\alpha_{r,s}(x)\) vanishes, and hence \(x=0\).  On each cone of
\(\Sigma\), every
\(\alpha_{r,s}\) has a fixed weak sign.  Consequently, for every \(r\),
\[
 \mathcal C_r
 =\bigcup_{\substack{D\in\Sigma\\D\subseteq\mathcal C_r}}D,
\]
and the cones in this union meet face-to-face.  Thus they form a
subdivision of \(\mathcal C_r\).

Let
\[
 \mathcal K=
 \{S(W_K)\cap D:D\in\Sigma,\ D\ne\{0\},\
                    D\subseteq\mathcal C_r\text{ for some }r\}.
\]
If \(D\subseteq\mathcal C_r\), then
\[
 D\cap(-D)\subseteq
 \mathcal C_r\cap(-\mathcal C_r)=\{0\},
\]
so every member of \(\mathcal K\) is a spherical polytope.  Every
nonzero face of a retained cone, and every nonzero intersection of two
retained cones, is again retained.  Thus the fan property makes
\(\mathcal K\) a finite spherical polyhedral complex.  For each \(r\),
the spherical sections of the cones in \(\Sigma\) contained in
\(\mathcal C_r\)
form a face-to-face subdivision of \(P_r\).  Hence \(\mathcal K\) is a
common refinement of the given spherical polytopes.

The hyperplane arrangement, and hence \(\Sigma\), is invariant under
negation.  If the original family is closed under negation, then the
condition that a cone \(D\in\Sigma\) be contained in at least one
\(\mathcal C_r\) is also invariant under \(D\mapsto-D\).  The retained
spherical sections then form an antipodally invariant common
refinement.
\end{proof}

\begin{lemma}
\label{lem:refinement-membership}
Let \(P\) be a spherical \(j\)-polytope subdivided by a spherical
polyhedral complex \(\mathcal K\).

\begin{enumerate}[label=\textup{(\roman*)},leftmargin=*]
\item If \(Q\in\mathcal K\) has dimension \(j\) and
\(\relint Q\cap\relint P\ne\varnothing\), then \(Q\subseteq P\).

\item Suppose that a spherical polyhedral complex \(\mathcal L\)
refines \(\mathcal K\), meaning that it subdivides every member of
\(\mathcal K\).  If \(H\in\mathcal L\) and
\(Q\in\mathcal K\) have dimension \(j\), with
\[
 H\subseteq Q,
 \qquad
 \relint H\subseteq\relint Q,
\]
then
\[
 H\subseteq P\quad\Longleftrightarrow\quad Q\subseteq P.
\]
\end{enumerate}
\end{lemma}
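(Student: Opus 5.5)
The plan is to work directly from the definition. ``\(\mathcal K\) subdivides \(P\)'' means that the \(j\)-dimensional members of \(\mathcal K\) contained in \(P\), say \(Q_1,\ldots,Q_s\), form a face-to-face subdivision of \(P\). Both parts then follow from two standard facts about convex polytopes:
\begin{enumerate}[label=\textup{(\alph*)},leftmargin=*]
\item a face of a polytope that contains a relative-interior point is the whole polytope;
\item a face of a \(j\)-polytope that has dimension \(j\) is the polytope itself.
\end{enumerate}
Both transfer to spherical polytopes through the affine models \(\pi_\ell,\kappa_\ell\) of Lemma~\ref{lem:spherical-affine-model}, since these preserve faces, dimensions and relative interiors.

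For (i), pick \(x\in\relint Q\cap\relint P\). Since \(P=\bigcup_\alpha Q_\alpha\), the point \(x\) lies in some \(Q_\alpha\). Both \(Q\) and \(Q_\alpha\) belong to the complex \(\mathcal K\), and \(Q\cap Q_\alpha\ni x\) is nonempty. Hence \(Q\cap Q_\alpha\) is a common face of \(Q\) and \(Q_\alpha\). As a face of \(Q\) containing the relative-interior point \(x\), it equals \(Q\) by (a). Thus \(Q\) is a face of \(Q_\alpha\) of the same dimension \(j\), so \(Q=Q_\alpha\subseteq P\) by (b). Only \(x\in\relint Q\) is really used here.

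For (ii), the direction \(Q\subseteq P\Rightarrow H\subseteq H\subseteq Q\subseteq P\) is trivial. Conversely, suppose \(H\subseteq P\), and write \(H=S(W_K)\cap\mathcal D\) and \(P=S(W_K)\cap\mathcal C\).
\begin{enumerate}[label=\textup{(\arabic*)},leftmargin=*]
\item Since \(\mathcal D\subseteq\mathcal C\) and both cones have dimension \(j+1\), we get \(\operatorname{span}\mathcal D=\operatorname{span}\mathcal C\).
\item So \(\relint H\) is a nonempty subset of the \(j\)-sphere \(S(W_K)\cap\operatorname{span}\mathcal C\) that is open there and contained in \(P\). An open subset of \(P\) lies in the relative interior of \(P\), so \(\relint H\subseteq\relint P\). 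This is cleanest in the affine model: it is the fact that an open subset of a full-dimensional convex polytope avoids its boundary.
\item Choosing \(x\in\relint H\), the hypothesis \(\relint H\subseteq\relint Q\) gives \(x\in\relint Q\cap\relint P\). Part (i) then yields \(Q\subseteq P\).
\end{enumerate}

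The main obstacle is the bookkeeping of relative interiors taken in different spans, specifically step (2) in (ii). I would handle it by passing to the affine section \(P_\ell\) via Lemma~\ref{lem:spherical-affine-model} and applying the usual Euclidean statement there. The condition \(\relint H\subseteq\relint Q\) is exactly what lets us avoid any face-to-face relation between \(\mathcal L\) and \(P\) itself.
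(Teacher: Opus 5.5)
Your proof is correct and follows essentially the same route as the paper: (ii) is reduced to (i) by finding a point of \(\relint H\cap\relint P\), which then lies in \(\relint Q\), and (i) is settled using the complex structure of \(\mathcal K\). The only difference is in (i). You use the common-face axiom with a top-dimensional piece \(Q_\alpha\), together with facts (a) and (b). The paper instead places \(x\) in the relative interior of some member \(Q'\subseteq P\) of \(\mathcal K\) and invokes disjointness of relative interiors of distinct members, which amounts to the same thing.
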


\begin{proof}
For (i), choose
\(x\in\relint Q\cap\relint P\).  The subdivision of \(P\) contains a
member \(Q'\in\mathcal K\), with \(Q'\subseteq P\), whose relative
interior contains \(x\).  Relative interiors of members of
\(\mathcal K\) are disjoint, so \(Q'=Q\).

For (ii), only the forward implication needs proof.  If
\(H\subseteq P\), then the common dimension implies that
\(\relint H\) meets \(\relint P\).  Any point of this intersection
lies in \(\relint Q\), so (i) gives \(Q\subseteq P\).
\end{proof}

Let
\[
 z=\sum_{r=1}^Na_r\langle P_r\rangle
   \in\widehat C_j^{\mathrm{poly}}(S(W_K);\F),
 \qquad a_r\in\F,
\]
and let \(\mathcal K\) be a common refinement of the polytopes in
\(z\).  For each \(j\)-dimensional member \(Q\in\mathcal K\), define
\[
 \mu_z(Q)=\sum_{\{r:Q\subseteq P_r\}}a_r\in\F.
\]

\begin{lemma}\label{lem:coefficient-criterion}
Two free \(j\)-chains \(z,z'\) have the same quotient image precisely
when, on some (equivalently, every) common refinement \(\mathcal K\),
\[
 \mu_z(Q)=\mu_{z'}(Q)\qquad(Q\in\mathcal K,\ \dim Q=j).
\]
\end{lemma}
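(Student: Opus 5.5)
The plan is to prove the two directions separately. Sufficiency will come from the subdivision relations. Necessity, and independence of the refinement, will come from a numerical invariant of free chains that visibly respects those relations.

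\emph{Sufficiency.} Fix a common refinement \(\mathcal K\) of the polytopes occurring in \(z\) and \(z'\). By definition, for each \(r\) the \(j\)-dimensional members \(Q\in\mathcal K\) with \(Q\subseteq P_r\) form a face-to-face subdivision of \(P_r\). The corresponding subdivision relation gives
\[
[P_r]=\sum_{\substack{Q\in\mathcal K,\ \dim Q=j\\ Q\subseteq P_r}}[Q]
\]
in \(C_j^{\mathrm{poly}}(S(W_K);\F)\). Summing with coefficients \(a_r\) and exchanging the order of summation gives
\[
\varpi_j(z)=\sum_{Q\in\mathcal K,\ \dim Q=j}\mu_z(Q)\,[Q].
\]
The same identity holds for \(z'\). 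Hence if \(\mu_z=\mu_{z'}\) on the \(j\)-dimensional members of \(\mathcal K\), then \(\varpi_j(z)=\varpi_j(z')\).

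\emph{Necessity.} Let \(\mathcal H^j\) denote \(j\)-dimensional Hausdorff measure on \(S(W_K)\). To a free chain \(z=\sum_r a_r\langle P_r\rangle\) I attach the \(\mathcal H^j\)-a.e.\ defined function
\[
f_z(x)=\sum_{\{r:\,x\in P_r\}}a_r\in\F.
\]
Two facts are needed.
\begin{enumerate}[label=\textup{(\alph*)},leftmargin=*]
\item The class of \(f_z\) modulo \(\mathcal H^j\)-null sets is unchanged by every subdivision relation. If \(P=\bigcup_\alpha P_\alpha\) with pairwise disjoint relative interiors, then \(\mathbf 1_P=\sum_\alpha\mathbf 1_{P_\alpha}\) off the union of the relative boundaries. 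That union is a finite union of spherical \((j-1)\)-polytopes, so it is \(\mathcal H^j\)-null. Consequently \(f\) descends to the quotient \(C_j^{\mathrm{poly}}(S(W_K);\F)\).
\item For any common refinement \(\mathcal K\) and any \(j\)-dimensional \(Q\in\mathcal K\), we have \(f_z(x)=\mu_z(Q)\) for \(\mathcal H^j\)-a.e.\ \(x\in\relint Q\). If \(Q\subseteq P_r\), then certainly \(x\in P_r\). If \(Q\not\subseteq P_r\), Lemma~\ref{lem:refinement-membership}(i) gives \(\relint Q\cap\relint P_r=\varnothing\). Then any \(x\in\relint Q\cap P_r\) lies either on the relative boundary of \(P_r\), or in the intersection of \(\operatorname{span}\) of the two cones when these spans differ. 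In both cases such points form an \(\mathcal H^j\)-null subset of \(\relint Q\).
\end{enumerate}
Now suppose \(\varpi_j(z)=\varpi_j(z')\). By (a), \(f_z=f_{z'}\) a.e. Each relative interior \(\relint Q\) of a \(j\)-dimensional member has positive \(\mathcal H^j\)-measure, so (b) forces \(\mu_z(Q)=\mu_{z'}(Q)\) on every common refinement. Combined with sufficiency, this also proves the parenthetical ``some (equivalently, every)''. Common refinements exist by Lemma~\ref{lem:spherical-common-refinement}.

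The main obstacle is the careful verification of (b), namely that the exceptional sets really are \(\mathcal H^j\)-null inside \(\relint Q\). The cleanest route is to transport everything through the affine model of Lemma~\ref{lem:spherical-affine-model}, using a fixed \(j\)-plane section. There, the claims reduce to standard facts about Lebesgue measure and boundaries of convex polytopes. A purely combinatorial alternative would compare two refinements through a common further refinement, using Lemma~\ref{lem:refinement-membership}(ii). This settles the ``every'' clause, but it does not by itself show that \(\mu\) is invariant under the quotient relations. That is why I would use the measure-theoretic invariant \(f_z\).
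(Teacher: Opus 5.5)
Your proof is correct. Your sufficiency argument is the paper's, but your necessity argument takes a genuinely different route.

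The paper stays purely polyhedral:
\begin{enumerate}[label=\textup{(\arabic*)},leftmargin=*]
\item it writes \(z+z'\) as a finite sum of subdivision relations;
\item it builds a further common refinement \(\mathcal L\) of the members of \(\mathcal K\) together with every polytope occurring in those relations;
\item at a generic point of \(\relint H\), avoiding the boundaries of the pieces, it checks that each relation contributes \(1+1=0\) to \(\mu\) on every \(j\)-dimensional \(H\in\mathcal L\);
\item it transfers \(\mu_z=\mu_{z'}\) from \(\mathcal L\) back to \(\mathcal K\) by picking \(H\subseteq Q\) with \(\relint H\subseteq\relint Q\) and invoking Lemma~\ref{lem:refinement-membership}(ii).
\end{enumerate}

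You replace the auxiliary refinement and the transfer step with a single refinement-independent invariant: the class of \(f_z\) modulo \(\mathcal H^j\)-null sets. Both arguments rest on the same principle, that lower-dimensional boundaries are negligible. The paper uses it pointwise; you use it measure-theoretically.

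What each buys:
\begin{itemize}[leftmargin=*]
\item Your version makes the ``every common refinement'' clause immediate. It also in effect embeds \(C_j^{\mathrm{poly}}(S(W_K);\F)\) into \(\F\)-valued functions modulo \(\mathcal H^j\)-null sets.
\item The price is importing Hausdorff measure: you need positivity on nonempty relatively open \(j\)-dimensional subsets of great \(j\)-spheres, and nullity of spherical \((j-1)\)-polytopes. The paper avoids this and stays inside finite polyhedral combinatorics.
\end{itemize}

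Two small remarks.
\begin{itemize}[leftmargin=*]
\item In (b), the alternative about differing spans is redundant. Once \(\relint Q\cap\relint P_r=\varnothing\), every \(x\in\relint Q\cap P_r\) already lies in the relative boundary of \(P_r\).
\item Your closing claim is too pessimistic. You say a combinatorial route cannot capture invariance under the quotient relations. The paper does exactly that, by including the polytopes of the finitely many witnessing relations in the refinement \(\mathcal L\) before comparing coefficients.
\end{itemize}
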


\begin{proof}
Indeed, if the displayed equality holds, subdivision gives
\[
 \varpi_j(z)
     =\sum_{\substack{Q\in\mathcal K\\\dim Q=j}}\mu_z(Q)[Q]
     =\sum_{\substack{Q\in\mathcal K\\\dim Q=j}}\mu_{z'}(Q)[Q]
     =\varpi_j(z').
\]
Conversely, fix an arbitrary common refinement \(\mathcal K\) and
suppose \(\varpi_j(z)=\varpi_j(z')\).  Express \(z+z'\) as a finite
linear combination
of subdivision relations, and apply
Lemma~\ref{lem:spherical-common-refinement} to the members of
\(\mathcal K\) and to every polytope occurring in those relations.
Let \(\mathcal L\) be the resulting common refinement.  In a relation
\[
 \langle P\rangle+\sum_\alpha\langle P_\alpha\rangle,
\]
a \(j\)-dimensional member \(H\in\mathcal L\) contributes nothing if
\(H\not\subseteq P\).  If \(H\subseteq P\), choose a point of
\(\relint H\) outside the union of the boundaries of the
top-dimensional pieces \(P_\alpha\).  It lies in the relative interior
of a unique \(P_\alpha\), and
Lemma~\ref{lem:refinement-membership}(i), applied to the
\(\mathcal L\)-subdivision of \(P_\alpha\), gives
\(H\subseteq P_\alpha\).  No second piece can contain \(H\), since
distinct top-dimensional subdivision pieces meet in a proper face.
Thus the relation has coefficient \(1+1=0\) on every such \(H\), and
therefore
\[
 \mu_z(H)=\mu_{z'}(H)
 \qquad(H\in\mathcal L,\ \dim H=j).
\]

 Given a \(j\)-dimensional member \(Q\in\mathcal K\), choose a
 \(j\)-dimensional member
 \(H\in\mathcal L\) such that
 \[
  H\subseteq Q,\qquad \dim H=\dim Q=j,\qquad
  \relint H\subseteq\relint Q.
 \]
 For every original \(j\)-polytope \(P\) occurring in \(z\) or
 \(z'\), Lemma~\ref{lem:refinement-membership}(ii) gives
 \(H\subseteq P\) if and only if \(Q\subseteq P\).  Therefore
\[
 \mu_z(Q)=\mu_z(H)=\mu_{z'}(H)=\mu_{z'}(Q).
\]
\end{proof}

In particular, every nonempty spherical polytope satisfies
\[
 [P]\ne0\quad\text{in }C_{\dim P}^{\mathrm{poly}}(S(W_K);\F).
\]
Indeed, take a common refinement of the free chains
\(\langle P\rangle\) and \(0\),
and choose a top-dimensional refinement member \(Q\subseteq P\).
Then
\(\mu_{\langle P\rangle}(Q)=1\), whereas \(\mu_0(Q)=0\).  The
coefficient criterion therefore gives \([P]\ne0\).

The zero cone corresponds to the empty spherical face and is not a
chain generator; in particular, a spherical \(0\)-polytope has no
nonempty facets.  Set \(C_{-1}^{\mathrm{poly}}=0\) and
\(\partial_0^{\mathrm{poly}}=0\).  For \(j\geq1\), first define on the
free space
\[
 \widehat\partial_j^{\mathrm{poly}}\langle P\rangle
 =\sum_{F\lessdot P}[F]
 \in C_{j-1}^{\mathrm{poly}}(S(W_K);\F).
\]

\begin{proposition}
\label{prop:polyhedral-boundary}
The map \(\widehat\partial_j^{\mathrm{poly}}\) respects every
subdivision relation and hence induces
\[
 \partial_j^{\mathrm{poly}}:
 C_j^{\mathrm{poly}}(S(W_K);\F)
 \longrightarrow C_{j-1}^{\mathrm{poly}}(S(W_K);\F).
\]
For every \(j\geq1\),
\[
 \partial_{j-1}^{\mathrm{poly}}\partial_j^{\mathrm{poly}}=0.
\]
\end{proposition}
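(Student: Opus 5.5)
To prove the proposition, I will use the coefficient criterion (Lemma~\ref{lem:coefficient-criterion}) on a common refinement together with the affine models of Lemma~\ref{lem:spherical-affine-model}. The argument has two parts: compatibility of \(\widehat\partial_j^{\mathrm{poly}}\) with subdivision relations, and then \(\partial\partial=0\).

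\textbf{Well-definedness.} Let \(P\) be a spherical \(j\)-polytope with a face-to-face subdivision \(P_1,\ldots,P_s\). We must show
\[
 \sum_{F\lessdot P}[F]=\sum_{\alpha}\sum_{G\lessdot P_\alpha}[G]
 \quad\text{in } C_{j-1}^{\mathrm{poly}}.
\]

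First take a common refinement \(\mathcal K\) of \(P\), the \(P_\alpha\), and all their facets, using Lemma~\ref{lem:spherical-common-refinement}. By Lemma~\ref{lem:coefficient-criterion}, it suffices to compare coefficients \(\mu\) on each \((j-1)\)-dimensional member \(E\in\mathcal K\).

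Next pick a point \(x\in\relint E\) lying in no face of dimension \(\le j-2\) of any polytope involved. By Lemma~\ref{lem:refinement-membership}, the containment \(E\subseteq G\) for a \((j-1)\)-face \(G\) of one of the polytopes is equivalent to \(x\in G\). So the count becomes local at the generic point \(x\).

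Pass to the affine model \(P_\ell\) via Lemma~\ref{lem:spherical-affine-model}. This turns the question into the classical statement for a face-to-face polytopal subdivision of a convex polytope near a generic point \(x\) of a codimension-one face:
\begin{itemize}
\item If \(x\) lies in the relative interior of \(P\), then the facets of the pieces through \(x\) come in pairs. The hyperplane through \(x\) locally separates exactly two pieces, and the face-to-face condition makes their shared facets pieces of a common facet. They contribute \(1+1=0\).
\item If \(x\) lies on a facet \(F\) of \(P\), then exactly one piece has a facet through \(x\), and that facet lies in \(F\). The contributions match: \(1\) on each side.
\end{itemize}
The left-hand side is a genuine free chain \(\sum_{F\lessdot P}\langle F\rangle\) mapped by \(\varpi\), so the criterion applies directly.

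\textbf{\(\partial\partial=0\).} By well-definedness it is enough to check a generator \([P]\). The expression \(\partial\partial[P]=\sum_{F\lessdot P}\sum_{G\lessdot F}[G]\) is the image of a free chain. In the face lattice of the convex polytope \(P_\ell\), every \((j-2)\)-face lies in exactly two facets, by the diamond property of face lattices (Ziegler). Hence each \(G\) appears twice, and the free chain itself vanishes mod~2. The degenerate case \(j=1\) is covered as well: a spherical \(0\)-polytope has no facets, so \(\partial_0=0\) is consistent.

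\textbf{Main obstacle.} I expect the local counting in the well-definedness step to be the hardest part. There are two places to be careful. First, the generic point \(x\) must be chosen outside all lower-dimensional faces, which is possible because finitely many \((j-2)\)-dimensional sets cannot cover \(\relint E\). Second, one must justify that near \(x\) the pieces containing \(x\) look like half-spaces cut by a single hyperplane. Here I will use that a small neighborhood of \(x\) in the span meets only pieces containing \(x\), and that each such piece has \(x\) in the relative interior of a facet.
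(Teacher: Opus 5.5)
Your proposal is correct. The \(\partial\partial=0\) part is the same diamond-property argument the paper uses, but your well-definedness step takes a different route.

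The paper works inside the given subdivision. It transports the subdivision to \(P_\ell\) and quotes three standard facts about face-to-face polytopal subdivisions: an interior \((j-1)\)-cell is a facet of exactly two pieces, a boundary one is a facet of exactly one piece, and the cells lying in a facet \(F\) of \(P\) form a face-to-face subdivision of \(F\). The sum \(\sum_\alpha\widehat\partial\langle P_\alpha\rangle\) then splits into a doubled interior sum plus \(\sum_{F}\sum_{G\subseteq F}[G]\). Each inner sum collapses to \([F]\) by a subdivision relation in degree \(j-1\).

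You instead pass to a common refinement of all the facets and use only the easy direction of Lemma~\ref{lem:coefficient-criterion}. This reduces the identity to a parity count of facets through one generic point of each \((j-1)\)-cell, which you settle with the local half-ball argument.

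The paper's version is shorter but leans on quoted facts about polytopal subdivisions, in particular the restriction to the facets of \(P\). Yours proves the needed incidence counts directly, never needs that restriction statement, and at the cost of the refinement machinery.

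When you write it out, add the trivial cases \(x\notin P\) and \(x\in\relint P_\alpha\) for some \(\alpha\); in both, both counts are zero. Your two bullets currently cover only the situation where \(x\) lies on a facet of some piece.
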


\begin{proof}
Let \(P=S(W_K)\cap\mathcal C\) and let
\(P=\bigcup_{\alpha=1}^sP_\alpha\) be a face-to-face subdivision.
Let \(\ell,P_\ell\) be defined in
Lemma~\ref{lem:spherical-affine-model}, then \(\pi_\ell\) identifies
the given subdivision, together with all its face incidences, with an
ordinary face-to-face polytopal subdivision of \(P_\ell\).

Returning to the original spherical subdivision, write
\(\mathcal B^\circ\) for the set of its \((j-1)\)-faces whose relative
interiors lie in \(\relint P\), and, for each facet \(F\lessdot P\),
write \(\mathcal B_F\) for the \((j-1)\)-faces that are contained in
\(F\).
In an ordinary polytopal subdivision, every internal facet belongs to
exactly two top-dimensional pieces, every boundary facet belongs to
exactly one, and the subdivision restricts to a face-to-face
subdivision of each facet.  Pulling these facts back through
\(\kappa_\ell\) gives
\[
 \begin{aligned}
 \sum_{\alpha=1}^s
   \widehat\partial_j^{\mathrm{poly}}\langle P_\alpha\rangle
 &=2\sum_{G\in\mathcal B^\circ}[G]
   +\sum_{F\lessdot P}\ \sum_{G\in\mathcal B_F}[G]\\
 &=\sum_{F\lessdot P}[F]
  =\widehat\partial_j^{\mathrm{poly}}\langle P\rangle.
 \end{aligned}
\]
Thus the boundary descends to the quotient.  Suppose \(j\geq2\), and
let \(G\) be a \((j-2)\)-face of \(P\).  The face correspondence in
Lemma~\ref{lem:spherical-affine-model} identifies the interval
\([G,P]\) with the corresponding interval in the face lattice of
\(P_\ell\).  By the diamond property for polytope face lattices
\cite[Theorem~2.7(iii)]{Ziegler},
\[
 \#\{F:G\lessdot F\lessdot P\}=2
\]
and hence
\[
 \partial_{j-1}^{\mathrm{poly}}
 \partial_j^{\mathrm{poly}}[P]
 =\sum_{\substack{G\text{ a }(j-2)\text{-face}\\\text{of }P}}2[G]=0.
\]
For \(j=1\), the identity is \(\partial_0^{\mathrm{poly}}=0\).
\end{proof}

The antipodal map induces
\[
 (A_{W_K})_{\#,j}:
 C_j^{\mathrm{poly}}(S(W_K);\F)
 \longrightarrow C_j^{\mathrm{poly}}(S(W_K);\F),
 \qquad
 (A_{W_K})_{\#,j}[P]=[-P].
\]
This map commutes with the polyhedral boundary.

\begin{proposition}\label{prop:polyhedral-augmentation}
The formula
\[
 \epsilon_{\mathrm{poly}}\!
 \left(\sum_p a_p[p]\right)=\sum_p a_p
\]
defines an augmentation on
\(C_*^{\mathrm{poly}}(S(W_K);\F)\).  It is preserved by the antipodal
involution.
\end{proposition}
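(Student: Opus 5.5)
The plan has three parts: show the formula is well defined on the quotient, check that \(\epsilon_{\mathrm{poly}}\partial_1^{\mathrm{poly}}=0\), and check antipodal invariance.

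\emph{Well-definedness.} First define \(\widehat\epsilon\) on the free space \(\widehat C_0^{\mathrm{poly}}(S(W_K);\F)\) by \(\widehat\epsilon\langle p\rangle=1\) for every spherical \(0\)-polytope \(p\). We must show that \(\widehat\epsilon\) vanishes on every subdivision relation in degree \(0\). A spherical \(0\)-polytope is \(S(W_K)\cap\mathcal C\) with \(\mathcal C\) a pointed one-dimensional cone, that is, a ray. So \(p\) is a single point. By Lemma~\ref{lem:spherical-affine-model}, a face-to-face subdivision of \(p\) corresponds to a subdivision of the \(0\)-dimensional polytope \(P_\ell\), which is a point. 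Hence any subdivision consists of the single piece \(p\) itself. This uses that the pieces have disjoint relative interiors and cover \(p\); repeated copies are excluded because two distinct indices would have intersecting relative interiors. Every degree-\(0\) relation is therefore trivial, and \(\widehat\epsilon\) descends to \(\epsilon_{\mathrm{poly}}\). An alternative route is Lemma~\ref{lem:coefficient-criterion}: on a common refinement, \(\widehat\epsilon(z)=\sum_Q\mu_z(Q)\), where each point \(p\) is its own unique refinement member. Either way the formula is independent of the representative.

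\emph{Augmentation identity.} Since \(\partial_1^{\mathrm{poly}}\) is induced from \(\widehat\partial_1^{\mathrm{poly}}\), it suffices to check \(\epsilon_{\mathrm{poly}}\widehat\partial_1\langle P\rangle=0\) for each spherical \(1\)-polytope \(P\). Here \(\mathcal C\) is a pointed two-dimensional cone, so \(P_\ell\) is a segment. By the face-poset isomorphism of Lemma~\ref{lem:spherical-affine-model}, \(P\) has exactly two facets, the two endpoint rays. These are distinct points, and each is nonzero in the quotient by the remark after Lemma~\ref{lem:coefficient-criterion}. Thus \(\widehat\partial_1\langle P\rangle=[F_1]+[F_2]\) has augmentation \(1+1=0\).

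\emph{Antipodal invariance.} The antipodal map sends a point \(p\) to the point \(-p\). Hence \(\epsilon_{\mathrm{poly}}(A_{W_K})_{\#,0}\) sends \([p]\) to \(1\), and by linearity \(\epsilon_{\mathrm{poly}}(A_{W_K})_{\#,0}=\epsilon_{\mathrm{poly}}\).

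The only step needing care is well-definedness. It rests on observing that subdivisions of a point are trivial, which follows at once from the affine model.
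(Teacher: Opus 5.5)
Your proposal is correct and follows the paper's proof step for step. Points admit only the trivial subdivision, so the formula descends to the quotient; the affine model shows that a spherical \(1\)-polytope is a segment with exactly two vertices, so \(\epsilon_{\mathrm{poly}}\partial_1^{\mathrm{poly}}=0\); and the antipode sends points to points. Your extra detail ruling out repeated pieces via disjoint relative interiors is sound, and the appeal to nonvanishing of \([F_1],[F_2]\) is harmless but unnecessary, since \(1+1=0\) regardless.
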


\begin{proof}
A spherical \(0\)-polytope is a point and has no nontrivial
subdivision, so the displayed formula is well defined.  By
Lemma~\ref{lem:spherical-affine-model}, every spherical
\(1\)-polytope is face-preservingly homeomorphic to a compact
one-dimensional convex polytope, hence to a closed line segment.  It
therefore has exactly two vertices, and
\(\epsilon_{\mathrm{poly}}\partial_1^{\mathrm{poly}}=0\) over \(\F\).
The antipodal map merely permutes the points, and hence preserves the
augmentation.
\end{proof}

\begin{lemma}\label{lem:polyhedral-antipodal-exactness}
For \(0\leq j\leq d\), put
\[
 \nu_j=\id+(A_{W_K})_{\#,j}.
\]
We call \(\nu_j\) the antipodal norm operator.  Then
\[
 \ker\nu_j=\operatorname{im}\nu_j.
\]
\end{lemma}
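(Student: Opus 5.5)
Since \(\nu_j^2=0\) over \(\F\), the inclusion \(\operatorname{im}\nu_j\subseteq\ker\nu_j\) is immediate, so the work is the reverse inclusion. The plan is to reduce everything to a single antipodally invariant spherical polyhedral complex and then argue basis element by basis element, using the coefficient criterion of Lemma~\ref{lem:coefficient-criterion}.

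Let \(c\in C_j^{\mathrm{poly}}(S(W_K);\F)\) with \(\nu_jc=0\). Write \(c=\varpi_j(z)\) with \(z=\sum_r a_r\langle P_r\rangle\) a free chain, and enlarge the family \(\{P_r\}\) by adjoining all \(-P_r\). This family is closed under negation, so Lemma~\ref{lem:spherical-common-refinement} gives an antipodally invariant common refinement \(\mathcal K\). By subdivision, \(c=\sum_{Q}\mu_z(Q)[Q]\), summed over the \(j\)-dimensional \(Q\in\mathcal K\), and \((A_{W_K})_{\#,j}c=\sum_Q\mu_z(Q)[-Q]=\sum_Q\mu_z(-Q)[Q]\), where the last equality uses that \(Q\mapsto-Q\) permutes the \(j\)-cells of \(\mathcal K\). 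Hence \(\nu_jc=\sum_Q\bigl(\mu_z(Q)+\mu_z(-Q)\bigr)[Q]\).

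Next I apply Lemma~\ref{lem:coefficient-criterion} with \(\mathcal K\) as the common refinement, comparing the free chain \(\sum_Q(\mu_z(Q)+\mu_z(-Q))\langle Q\rangle\) with \(0\). Because distinct \(j\)-cells of a complex have disjoint relative interiors, no \(j\)-cell is contained in another, so the \(\mu\)-coefficient of that free chain at \(Q\) is exactly \(\mu_z(Q)+\mu_z(-Q)\). This is the same observation that shows the classes \([Q]\) of the \(j\)-cells are linearly independent. Thus \(\nu_jc=0\) forces \(\mu_z(Q)=\mu_z(-Q)\) for every \(j\)-cell \(Q\).

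The key geometric point is that no cell is fixed: \(Q\ne-Q\) for every \(Q\in\mathcal K\). If \(Q=-Q\) with \(Q=S(W_K)\cap\mathcal D\), then some \(x\in Q\) would have \(-x\in Q\), so \(\mathcal D\cap(-\mathcal D)\ne\{0\}\), which contradicts pointedness. So the \(j\)-cells split into free antipodal pairs \(\{Q,-Q\}\). Choose one representative \(Q\) from each pair and set \(w=\sum_{\text{reps}}\mu_z(Q)[Q]\). Then
\[
\nu_jw=\sum_{\text{reps}}\mu_z(Q)\bigl([Q]+[-Q]\bigr)=\sum_{Q}\mu_z(Q)[Q]=c,
\]
using \(\mu_z(Q)=\mu_z(-Q)\). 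Hence \(c\in\operatorname{im}\nu_j\).

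I expect the main obstacle to be the careful passage from the quotient equation \(\nu_jc=0\) to the coefficientwise equalities. This needs both the linear independence of the cell classes on a fixed refinement and the antipodal invariance of \(\mathcal K\), so that \(-Q\) is again a cell and \(\mu_z(-Q)\) is defined. Both come from Lemmas~\ref{lem:spherical-common-refinement} and~\ref{lem:coefficient-criterion}, so the argument should go through with only bookkeeping.
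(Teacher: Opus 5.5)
Your proposal is correct and matches the paper's proof step for step. Both use an antipodally invariant common refinement, apply the coefficient criterion to get $\mu_z(Q)=\mu_z(-Q)$, use pointedness to rule out fixed cells, and choose orbit representatives. The only cosmetic difference is that the paper compares the free chains $z$ and $z^-=\sum_r a_r\langle -P_r\rangle$ directly, whereas you expand $\nu_j c$ in the cell classes of $\mathcal K$ and use their linear independence; both are the same application of Lemma~\ref{lem:coefficient-criterion}.
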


\begin{proof}
The inclusion \(\operatorname{im}\nu_j\subseteq\ker\nu_j\) follows
from \(\nu_j^2=0\).  Conversely, let \(Z\in\ker\nu_j\) and choose a
free representative
\[
 z=\sum_{r=1}^N a_r\langle P_r\rangle
\]
of \(Z\).  Apply Lemma~\ref{lem:spherical-common-refinement} to the
family consisting of the \(P_r\) and the \(-P_r\), and choose the
resulting common refinement \(\mathcal K\) to be antipodally
invariant.  Subdivision gives
\[
 Z=\sum_{\substack{Q\in\mathcal K\\\dim Q=j}}\mu_z(Q)[Q].
\]
Let
\[
 z^-=\sum_{r=1}^N a_r\langle-P_r\rangle.
\]
Then
\(\varpi_j(z^-)=(A_{W_K})_{\#,j}Z\), and \(\mathcal K\) is a common
refinement of the polytopes occurring in \(z\) and \(z^-\).  Moreover,
\[
 \mu_{z^-}(Q)=\mu_z(-Q)
 \qquad(Q\in\mathcal K,\ \dim Q=j).
\]
Since \(Z\in\ker\nu_j\), one has
\((A_{W_K})_{\#,j}Z=Z\), and hence
\(\varpi_j(z^-)=\varpi_j(z)\).
Lemma~\ref{lem:coefficient-criterion} therefore yields
\[
 \mu_z(Q)=\mu_z(-Q)
 \qquad(Q\in\mathcal K,\ \dim Q=j).
\]
No nonempty member \(Q\) of \(\mathcal K\) equals \(-Q\).  Indeed, if
\(x\in Q=-Q\), then both \(x\) and \(-x\) belong to the defining cone
of \(Q\), contrary to pointedness.  Let \(\mathcal R_j\) contain
exactly one member of each antipodal orbit on the \(j\)-dimensional
members of \(\mathcal K\), and put
\[
 W=\sum_{Q\in\mathcal R_j}\mu_z(Q)[Q].
\]
Then
\[
 \begin{aligned}
 \nu_jW
 &=\sum_{Q\in\mathcal R_j}
   \mu_z(Q)\bigl([Q]+[-Q]\bigr)\\
 &=\sum_{\substack{Q\in\mathcal K\\\dim Q=j}}
   \mu_z(Q)[Q]
 =Z.
 \end{aligned}
\]
This shows that $Z\in\operatorname{im}\nu_j$.
\end{proof}

\section{From a rook labeling to a polyhedral chain map}

\subsection{Rook galleries and rank}

For \((a,b)\in\Omega_K\), define
\begin{equation*}
 \operatorname{root}(a,b)=e_a-e_b\in W_K.
\end{equation*}
Here \(e_1,\ldots,e_K\) are the standard basis vectors of
\(\mathbb R^K\).
All ranks and linear-independence statements concerning roots are over
\(\mathbb R\).
A sequence \((a_i,b_i)_{i=1}^m\) with \(m\geq1\) is a
\emph{rook gallery} if every two
consecutive terms have the same first coordinate or the same second
coordinate.

\begin{lemma}
\label{lem:gallery-rank}
Let \((a_i,b_i)_{i=1}^m\) be a rook gallery. If
\[
 0\in\conv\{e_{a_i}-e_{b_i}:1\leq i\leq m\},
\]
then
\[
 \rank\{e_{a_i}-e_{b_i}:1\leq i\leq m\}\leq m-2.
\]
\end{lemma}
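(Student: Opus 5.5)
The plan is to read the roots as directed edges and reduce the statement to a count of how many vertices a rook gallery can reach. Regard \(e_a-e_b\) as the directed edge \(a\to b\) on the vertex set \([K]\), and let \(\Gamma\) be the multigraph formed by the edges \(a_i\to b_i\), \(1\le i\le m\). The rank of a set of vectors \(e_a-e_b\) equals the number of vertices touched minus the number of connected components of the underlying undirected graph (the usual incidence-matrix rank formula). Consecutive terms of a rook gallery share a vertex, so \(\Gamma\) is connected, and the rank is \(V-1\), where \(V\) is the number of vertices touched.

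The first step is to count \(V\) along the gallery. The first edge touches two vertices. Each later edge \(a_i\to b_i\) shares its tail or its head with \(a_{i-1}\to b_{i-1}\), so it adds at most one new vertex. Call step \(i\ge2\) \emph{degenerate} if it adds no new vertex, and let \(\delta\) be the number of degenerate steps. Then \(V=m+1-\delta\) and \(\rank=m-\delta\), so it suffices to show \(\delta\ge2\).

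The second step uses the hypothesis \(0\in\conv\). It gives nonnegative weights, not all zero, on the edges whose weighted sum vanishes, that is, a nonzero nonnegative circulation. By the standard decomposition of circulations, its support contains a directed cycle \(Z\) of \(\Gamma\), possibly a \(2\)-cycle \(a\to b\to a\), or a cycle formed by a repeated identical edge only if some such structure closes. If \(\delta=0\), then every step adds a new vertex, so the edges are distinct undirected pairs forming a tree. A tree has no cycle, so \(\delta\ge1\).

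The remaining step, which I expect to be the main obstacle, is to rule out \(\delta=1\). Suppose the unique degenerate step is \(k\), and write \(g=a_k\to b_k\). Edges added after step \(k\) are pendant (each brings in a fresh leaf vertex), so they cannot lie on \(Z\). Hence \(Z\) consists of \(g\) together with the unique path \(P\) joining the endpoints of \(g\) in the tree \(T\) formed by \(e_1,\ldots,e_{k-1}\). When \(g\) duplicates a pair already in \(T\), \(P\) is that single edge.

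The key local fact is this: if consecutive gallery edges meet at a vertex \(v\), they are both out-edges at \(v\) or both in-edges at \(v\), so they never form a directed path through \(v\). I would first prove an invariant for trees built by rook galleries: whenever a tree path is traversed consistently as a directed path, its ends relate in a controlled way to the gallery order, for instance that the last edge of the gallery touching such a path meets it at a ``non-flow'' vertex. I would then apply this invariant at the vertex shared by \(g\) and \(e_{k-1}\) to contradict the requirement that \(Z\) have one in-edge and one out-edge there.

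If the invariant proves awkward, I would instead argue by a minimal counterexample. Take the gallery shortest. Then \(e_1\) and \(e_m\) both lie on \(Z\), since otherwise one could delete an end edge: deleting \(e_1\) or \(e_m\) lowers \(m\) by one and lowers the rank by at most one, and the hypothesis survives because \(Z\) is untouched. This forces \(k=m\), so \(g=e_m\) closes a directed cycle through the whole tree path. The rook condition at the two ends of the gallery, where \(e_1\) is the first edge and \(e_m\) shares an endpoint with \(e_{m-1}\), then produces two consecutive same-orientation incidences on \(Z\), which contradicts \(Z\) being directed.
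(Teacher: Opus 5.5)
Your bookkeeping is correct: \(V=m+1-\delta\) and \(\rank=m-\delta\), the case \(\delta=0\) is properly excluded, and in a shortest counterexample you do get \(k=m\), \(Z=e_m\cup P\) and \(e_1\in P\). But excluding \(\delta=1\) is the whole content of the lemma, and neither of your two routes establishes it.

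The proposed invariant is never stated in a form that can be checked. The last sentence of the minimal-counterexample route does not follow. Suppose \(e_m=a\to b\) and \(e_{m-1}=a\to d\) with \(d\ne b\). Then the only edges of \(Z\) at \(a\) are \(e_m\), leaving \(a\), and the last edge of \(P\), entering \(a\). So \(e_{m-1}\notin Z\), and there is no same-orientation clash at \(a\). Being the first edge imposes no local condition on \(e_1\) at all.

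Concretely, take \(Z\) to be \(a\to b\to c\to a\) with \(e_1=b\to c\), \(e_{m-1}=a\to d\), \(e_m=a\to b\). Every condition you invoke at the two ends of the gallery holds. What actually excludes this configuration is the rook condition at the interior steps, which your sketch never uses.

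The missing idea is to track the \emph{role} (tail or head) in which each symbol is used, not just whether the symbol is new. In a nondegenerate step, the shared symbol is reused in the role it already has, and the new symbol enters with a single role. By induction, no vertex of the tree \(T=\{e_1,\ldots,e_{k-1}\}\) is both a tail and a head, so \(T\) has no directed path of length two. Hence \(P\) is the single edge \(b_k\to a_k\). The vertex that \(g=a_k\to b_k\) shares with \(e_{k-1}\), in the same role, then has both roles inside \(T\), a contradiction.

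The paper packages this globally. It splits each symbol \(s\) into \(s^+\) and \(s^-\); consecutive gallery edges then share a split vertex, so \(v+t\leq m+1\), where \(t\) counts symbols used in both roles. A directed cycle forces \(t\geq2\), whence \(\rank=v-1\leq m-2\). This needs no case analysis on \(\delta\) and no minimality argument.
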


\begin{proof}
Let \(G\) be the directed multigraph with edges \(a_i\to b_i\), and
put \(v=|V(G)|\). Split each used coordinate symbol \(s\) into a tail
vertex \(s^+\) and a head vertex \(s^-\), retaining only the roles in
which \(s\) occurs, and replace \(a_i\to b_i\) by the bipartite edge
\(a_i^+b_i^-\). Consecutive gallery edges meet at the copy of their
common coordinate, so this split graph is connected. In particular,
the underlying graph of \(G\) is connected.

Let \(t\) be the number of coordinate symbols that occur both as tails
and as heads. The split graph has \(v+t\) vertices and \(m\) edges.
Its connectedness gives
\begin{equation}\label{eq:split-gallery-count}
 m\geq v+t-1.
\end{equation}

There are coefficients satisfying
\[
 \lambda_i\geq0,\qquad
 \sum_{i=1}^m\lambda_i=1,
 \qquad
 \sum_{i=1}^m\lambda_i(e_{a_i}-e_{b_i})=0.
\]
The last equality is flow conservation at every vertex. Start with
an edge of positive weight. At its head, conservation supplies an
outgoing edge of positive weight; iterating in the finite graph
eventually repeats a vertex and hence produces a directed cycle.
Because \(a_i\ne b_i\), this cycle is not a loop, so at least two
coordinate symbols on it occur in both roles. Hence \(t\geq2\).
Incidence vectors of a connected graph on \(v\) vertices have rank
\(v-1\). By \eqref{eq:split-gallery-count},
\[
 \rank\{e_{a_i}-e_{b_i}:1\leq i\leq m\}
 =v-1\leq m-t\leq m-2.
\]
\end{proof}

For the rest of the construction, suppose that
\begin{equation*}
 q:V(Q_K)\longrightarrow\Omega_K
\end{equation*}
is a rook labeling.

\subsection{Freudenthal subdivision}

The cubical cellular complex is retained as the source because the
rook condition controls labels only along cube edges.  Although every
maximal Freudenthal simplex in a cubical face follows a monotone cube
path, one of its lower-dimensional faces may skip intermediate
vertices and hence contain an edge joining nonadjacent cube vertices.
The rook condition gives no direct relation between the labels at the
ends of such an edge.  Passing from a cubical face to the sum of all
its maximal Freudenthal simplices makes these uncontrolled internal
faces cancel over \(\F\).

Give \(I^K=[0,1]^K\) its standard cubical structure and write
\[
 \partial_{\mathrm{top}}I^K
 =\{x\in I^K:x_i\in\{0,1\}\text{ for at least one }i\}.
\]
This is the topological boundary with its induced cell structure.  The
vertex antipode extends to the cellular involution
\[
 A_K(x_1,\ldots,x_K)=(1-x_1,\ldots,1-x_K).
\]

Let \({\mathcal T}_K\) be the standard Freudenthal
triangulation of \(I^K\). If a cubical \(j\)-face \(F\) has lower vertex
\(0_F\), upper vertex \(1_F\), and free coordinates
\(i_1<\cdots<i_j\), then its maximal simplices are
\[
 \sigma_\pi=[x_0,\ldots,x_j],
 \qquad \pi\in\mathfrak S_j,
\]
where \(x_r\) is obtained from \(0_F\) by changing
\(i_{\pi(1)},\ldots,i_{\pi(r)}\) from \(0\) to \(1\).

We use its
mod--\(2\) simplicial chains, with
\[
 \partial_j^{\mathrm{simp}}[x_0,\ldots,x_j]
 =\sum_{i=0}^j[x_0,\ldots,\widehat{x_i},\ldots,x_j].
\]
Here \(C_{-1}^{\mathrm{simp}}=0\), \(\partial_0^{\mathrm{simp}}=0\),
and
\((A_K)_{\#,j}[x_0,\ldots,x_j]=[A_Kx_0,\ldots,A_Kx_j]\).

Define an \(\F\)-linear map
\[
 \mathfrak F_j:
 C_j^{\cell}(\partial_{\mathrm{top}}I^K;\F)
 \longrightarrow C_j^{\mathrm{simp}}(\mathcal T_K;\F)
\]
on a cubical \(j\)-face by
\begin{equation}\label{eq:freudenthal-chain}
 \mathfrak F_j(F)=\sum_{\pi\in\mathfrak S_j}\sigma_\pi.
\end{equation}
Deleting an internal \(x_r\) pairs the resulting facet with the order
obtained by swapping \(\pi(r)\) and \(\pi(r+1)\), so such facets cancel.
Deleting \(x_0\) or \(x_j\) gives, respectively, a maximal simplex in
the corresponding upper or lower cubical facet, and every boundary
simplex occurs once.  Hence
\begin{equation}\label{eq:freudenthal-boundary}
 \partial_j^{\mathrm{simp}}\mathfrak F_j
 =\mathfrak F_{j-1}\partial_j^{\cell}\qquad(j\geq1).
\end{equation}
The antipode reverses the vertex order of each monotone gallery, hence
permutes the summands in \eqref{eq:freudenthal-chain} and gives
\begin{equation}\label{eq:freudenthal-equivariance}
 \mathfrak F_j(A_KF)=(A_K)_\#\mathfrak F_j(F).
\end{equation}

\subsection{Radial chains}

Let \(j\geq0\), and let
\(v_0,\ldots,v_j\in W_K\setminus\{0\}\) satisfy
\[
 0\notin\conv\{v_0,\ldots,v_j\}.
\]
Write
\[
\mathcal C=\pos\{v_0,\ldots,v_j\}
 =\left\{\sum_{i=0}^jt_iv_i:t_i\geq0\right\}
\]
for their positive cone.  This cone is pointed: a nonzero vector in its
intersection with its negative would give a nontrivial nonnegative
relation among the \(v_i\), contrary to the convex-hull hypothesis.
When the \(v_i\) are linearly independent, the cone has dimension
\(j+1\), so its spherical section is a spherical \(j\)-polytope.
Define the \emph{radial chain}
\(\mathcal R[v_0,\ldots,v_j]\in
C_j^{\mathrm{poly}}(S(W_K);\F)\) by
\[
 \mathcal R[v_0,\ldots,v_j]=
 \begin{cases}
 [S(W_K)\cap\pos\{v_0,\ldots,v_j\}],
     &v_0,\ldots,v_j\text{ are linearly independent},\\
 0,&v_0,\ldots,v_j\text{ are linearly dependent}.
 \end{cases}
\]
By the nonvanishing observation above,
\[
 \mathcal R[v_0,\ldots,v_j]\ne0
 \quad\Longleftrightarrow\quad
 v_0,\ldots,v_j\text{ are linearly independent}.
\]
Both linear dependence and the positive cone are unchanged by
permuting \(v_0,\ldots,v_j\).  Hence
\(\mathcal R[v_0,\ldots,v_j]\) is invariant under every permutation of
its entries.

Let \(\ell,H_\ell,\kappa_\ell\) be defined as in Lemma~\ref{lem:spherical-affine-model},
and put
\[
 w_i=\frac{v_i}{\ell(v_i)}\qquad(0\leq i\leq j).
\]

\begin{lemma}
\label{lem:positive-cone-section}
For every nonempty \(I\subseteq\{0,\ldots,j\}\),
\[
 H_\ell\cap\pos\{v_i:i\in I\}=\conv\{w_i:i\in I\},
 \qquad
 \kappa_\ell\bigl(\conv\{w_i:i\in I\}\bigr)
 =S(W_K)\cap\pos\{v_i:i\in I\}.
\]
Moreover, \(\{v_i:i\in I\}\) is linearly independent if and only if
\(\{w_i:i\in I\}\) is affinely independent.
\end{lemma}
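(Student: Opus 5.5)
The plan is to work entirely with the ray decomposition used in Lemma~\ref{lem:spherical-affine-model}, applied to the pointed cone \(\mathcal C=\pos\{v_0,\ldots,v_j\}\) with its functional \(\ell\). The key fact is that \(\ell>0\) on \(\mathcal C\setminus\{0\}\), so every nonzero \(x\in\mathcal C\) has the unique form \(x=\ell(x)y\) with \(y\in H_\ell\cap\mathcal C\). In particular \(\ell(v_i)>0\), so each \(w_i\) is well defined and lies in \(H_\ell\). For a subset \(I\), \(\pos\{v_i:i\in I\}\subseteq\mathcal C\), so the same positivity holds there.

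For the first identity, take \(x=\sum_{i\in I}t_iv_i\) with \(t_i\ge0\) and \(\ell(x)=1\). Rewrite it as
\[
 x=\sum_{i\in I}s_iw_i,\qquad s_i=t_i\ell(v_i)\ge0.
\]
Applying \(\ell\) gives \(\sum_{i\in I}s_i=1\), so \(x\in\conv\{w_i:i\in I\}\). Conversely, a convex combination of the \(w_i\) is a nonnegative combination of the \(v_i\), and since \(\ell(w_i)=1\) it has \(\ell\)-value \(1\).

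The second identity follows from the first. By the ray decomposition, \(\kappa_\ell\) maps \(H_\ell\cap\mathcal D\) bijectively onto \(S(W_K)\cap\mathcal D\) for any subcone \(\mathcal D\subseteq\mathcal C\): a unit vector \(u\in\mathcal D\) is \(\kappa_\ell(u/\ell(u))\), and \(\kappa_\ell\) of a point of \(\mathcal D\) stays in \(\mathcal D\) because \(\mathcal D\) is a cone. Apply this with \(\mathcal D=\pos\{v_i:i\in I\}\).

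For the independence claim, scaling by the nonzero factors \(\ell(v_i)\) does not change linear independence, so it suffices to show that \(\{w_i\}\) is linearly independent if and only if it is affinely independent. Linear independence always implies affine independence. For the converse, suppose \(\sum c_iw_i=0\). Applying \(\ell\) gives \(\sum c_i=0\), which makes this an affine dependence; affine independence then forces all \(c_i=0\).

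No step is a real obstacle. The only point needing care is checking that \(\ell\) is strictly positive on \(\mathcal C\setminus\{0\}\). This holds because \(q\) is the minimum-norm point of \(\conv P\), so \(\langle q,x\rangle\ge\|q\|^2>0\) for every \(x\in P\), and hence \(\ell>0\) on every nonzero element of \(\mathcal C\).
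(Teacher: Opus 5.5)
Your proposal is correct and follows essentially the same route as the paper: the rescaling $s_i=t_i\ell(v_i)$ for the first identity, radial normalization for the second, and $\ell(w_i)=1$ to convert linear relations among the $v_i$ into affine relations among the $w_i$. Your explicit check that $\ell>0$ on $\mathcal C\setminus\{0\}$, via the minimum-norm property of $q$, makes precise the paper's remark that $\ell$ is ``positive''; that remark has to be read on $\mathcal C\setminus\{0\}$, since a linear functional cannot be positive on all of $W_K$.
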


\begin{proof}
If \(x=\sum_{i\in I}t_iv_i\in H_\ell\), then
\(s_i=t_i\ell(v_i)\) are nonnegative, satisfy
\(\sum_{i\in I}s_i=1\), and give \(x=\sum_{i\in I}s_iw_i\); the
converse follows by taking \(t_i=s_i/\ell(v_i)\).  This proves the
first identity, and radial normalization gives the second.  Since
\(\ell(w_i)=1\), the rescaling \(d_i=c_i\ell(v_i)\) bijects linear
relations among the \(v_i\) with affine relations among the \(w_i\).
\end{proof}

For independent generators, the radial boundary is the usual
spherical-simplex boundary.  The difficulty is the dependent case:
the radial \(j\)-chain vanishes, but its codimension-one radial faces
need not vanish separately.

Fix \(j\geq1\) and points \(w_0,\ldots,w_j\) in an affine space such
that
\[
 \dim\conv\{w_0,\ldots,w_j\}=j-1.
\]
Let
\[
 \Delta^j
 =
 \left\{t=(t_0,\ldots,t_j)\in\mathbb R^{j+1}:
 t_i\geq0,\ \sum_{i=0}^j t_i=1\right\},
\]
and let
\[
 f:\Delta^j\longrightarrow
 \operatorname{aff}\{w_0,\ldots,w_j\},
 \qquad
 f(t)=\sum_{i=0}^j t_iw_i.
\]
For \(0\leq i\leq j\), write
\[
 \Delta_i=\{t\in\Delta^j:t_i=0\},
 \qquad
 F_i=f(\Delta_i)
     =\conv\{w_0,\ldots,\widehat{w_i},\ldots,w_j\}.
\]
Finally, put
\[
 E=f\bigl((\Delta^j)^{(j-2)}\bigr),
\]
where \((\Delta^j)^{(j-2)}\) is the union of the faces spanned by at
most \(j-1\) vertices; for \(j=1\), this set is empty.

Carath\'eodory's theorem gives
\begin{equation}\label{eq:low-dimensional-facet-image}
 \dim F_i\leq j-2
 \quad\Longrightarrow\quad
 F_i\subseteq E.
\end{equation}
Indeed, every point of such an \(F_i\) is a convex combination of at
most \(j-1\) of its vertices.  Thus, for \(y\notin E\), every \(F_i\)
containing \(y\) is automatically \((j-1)\)-dimensional.

\begin{lemma}
\label{lem:generic-degenerate-fiber}
For every
\[
 y\in\operatorname{aff}\{w_0,\ldots,w_j\}\setminus E,
\]
exactly two of the sets \(F_i\) contain \(y\) when
\(y\in f(\Delta^j)\), and none contains \(y\) otherwise.  Consequently,
\[
 \#\{i:y\in F_i,\ \dim F_i=j-1\}=0
 \qquad\text{in }\F.
\]
\end{lemma}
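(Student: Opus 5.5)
The plan is to study the fibre of \(f\) over \(y\), using the fact that \(f\) has a one-dimensional kernel. Since \(\dim\operatorname{aff}\{w_0,\ldots,w_j\}=j-1\), the affine relations \(\sum c_i w_i=0\), \(\sum c_i=0\) form a one-dimensional space. Fix a nonzero such vector \(c\). Then \(f\), extended affinely to the hyperplane \(\{\sum t_i=1\}\), has fibres that are lines parallel to \(c\). Thus for \(y\) in the affine hull,
\[
 f^{-1}(y)=\Delta^j\cap(t^0+\mathbb R c)
\]
for any \(t^0\) with \(\sum t^0_i=1\) and \(f(t^0)=y\). This is a compact convex subset of a line, so it is empty, a point, or a closed segment. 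Also \(y\in F_i\) if and only if this fibre meets \(\Delta_i=\{t_i=0\}\). If \(y\notin f(\Delta^j)\), the fibre is empty, so no \(F_i\) contains \(y\); this settles that case at once.

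Next I translate the hypothesis \(y\notin E\). A point \(t\in\Delta^j\) with at least two zero coordinates lies in a face spanned by at most \(j-1\) vertices, so it lies in \((\Delta^j)^{(j-2)}\), and then \(f(t)\in E\). Hence every point of the fibre has at most one vanishing coordinate. Along the line each coordinate \(t_i\) is an affine function of the parameter \(s\) in \(t^0+sc\). If some \(t_i\) vanished identically on a nondegenerate fibre, every point of the fibre would have \(t_i=0\). Then at an endpoint, where another coordinate must vanish, that point would have two zero coordinates, which is forbidden. So each \(t_i\) vanishes at most at one point of the fibre.

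The step I expect to need the most care is showing that a nonempty fibre is a nondegenerate segment and that its two endpoints are cut out by two different coordinates. Since \(c\neq0\) and \(\sum c_i=0\), \(c\) has entries of both signs, so the line leaves \(\Delta^j\) in both directions and the fibre is bounded. Suppose the fibre were a single point \(t\). At most one coordinate, say \(t_i\), is zero there, and all others are positive. Moving by \(\pm\varepsilon c\) with the sign chosen so that \(t_i\) stays nonnegative (either sign if \(c_i=0\)) then stays in \(\Delta^j\), which is a contradiction. So the fibre is a segment with distinct endpoints. Each endpoint lies on \(\partial\Delta^j\) and hence has exactly one zero coordinate. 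Interior points of the segment have no zero coordinate, because a coordinate vanishing at an interior point but not identically would become negative nearby on the segment. The two endpoint indices are distinct, since otherwise that affine coordinate would vanish at two points and hence identically, which was excluded above. Therefore exactly two sets \(F_i\) contain \(y\).

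Finally, for the displayed parity statement, note that by \eqref{eq:low-dimensional-facet-image} any \(F_i\) containing \(y\notin E\) has dimension exactly \(j-1\). So the count of \(i\) with \(y\in F_i\) and \(\dim F_i=j-1\) is \(2\) or \(0\), which is \(0\) in \(\F\).
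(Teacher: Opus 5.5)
Your proof is correct and follows essentially the same route as the paper. Both arguments view the fibre of the affine extension of \(f\) as a line meeting \(\Delta^j\) in a compact segment. Both use \(y\notin E\) to make that segment nondegenerate by the same \(\pm\varepsilon\) perturbation, and then show its two endpoints lie in distinct facets while its interior avoids \(\partial\Delta^j\).

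The only minor variation concerns ruling out a coordinate that vanishes identically on the fibre. You produce an endpoint with two zero coordinates, whereas the paper deduces \(\dim F_i\le j-2\) from noninjectivity of \(f\) on \(\operatorname{aff}\Delta_i\) and invokes \eqref{eq:low-dimensional-facet-image}.
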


\begin{proof}
Let
\[
 A^j=\left\{t\in\mathbb R^{j+1}:\sum_{i=0}^j t_i=1\right\},
\]
and let
\[
 \widetilde f:A^j\longrightarrow
 \operatorname{aff}\{w_0,\ldots,w_j\}
\]
be the affine extension of \(f\).  Since
\(\dim A^j=j\) and the image has dimension \(j-1\), every fiber of
\(\widetilde f\) is an affine line.

If \(y\notin f(\Delta^j)\), then no \(F_i\) contains \(y\).  Suppose
that \(y\in f(\Delta^j)\), and set
\[
 \Lambda_y=\widetilde f^{-1}(y),
 \qquad
 L_y=\Lambda_y\cap\Delta^j.
\]
Then \(L_y\) is a nonempty compact interval and
\[
 L_y\cap(\Delta^j)^{(j-2)}=\varnothing.
\]

The interval \(L_y\) is nondegenerate.  Indeed, suppose that
\(L_y=\{t\}\), and write
\(\Lambda_y=t+\mathbb R\xi\) with \(\xi\ne0\).
Since \(t\) does not lie in the \((j-2)\)-skeleton, at most one
coordinate of \(t\) vanishes.  If all coordinates are positive, then
\(t+\varepsilon\xi\in\Delta^j\) for all sufficiently small
\(\varepsilon\).  If \(t_r=0\) is the unique zero coordinate, choose
the sign of a sufficiently small \(\varepsilon\ne0\) so that
\(\varepsilon\xi_r\geq0\); again
\(t+\varepsilon\xi\in\Delta^j\).  Both cases contradict
\(L_y=\{t\}\).

Write
\[
 L_y=[p,q],
 \qquad p\ne q.
\]
No facet \(\Delta_i\) contains \(L_y\).  Otherwise the restriction of
\(\widetilde f\) to \(\operatorname{aff}\Delta_i\) would be
noninjective, and hence
\[
 \dim F_i\leq j-2.
\]
Since \(y\in F_i\), this would contradict
\eqref{eq:low-dimensional-facet-image} and \(y\notin E\).

Because \(L_y\) avoids the \((j-2)\)-skeleton, each endpoint lies in
the relative interior of a unique facet.  These facets are distinct:
otherwise their convexity would imply that they contain all of
\(L_y\), contrary to the preceding paragraph.

Moreover,
\[
 \operatorname{relint}L_y\subseteq\operatorname{relint}\Delta^j.
\]
Indeed, if an interior point of \(L_y\) belonged to \(\Delta_i\), then
the nonnegative affine function \(t\mapsto t_i\) on \(L_y\) would
vanish at an interior point and hence vanish identically.  This would
force \(L_y\subseteq\Delta_i\), which was just ruled out.

Finally,
\[
 y\in F_i
 \quad\Longleftrightarrow\quad
 L_y\cap\Delta_i\ne\varnothing.
\]
Since \(L_y\) meets the boundary of \(\Delta^j\) only at \(p\) and
\(q\), exactly two of the sets \(F_i\) contain \(y\).  By
\eqref{eq:low-dimensional-facet-image}, both have dimension \(j-1\).
Their total multiplicity is therefore \(2=0\) in \(\F\).
\end{proof}

\begin{lemma}
\label{lem:degenerate-radial-cancellation}
Let \(j\geq1\).  Suppose that
\(v_0,\ldots,v_j\in W_K\setminus\{0\}\) are linearly dependent and
\[
 0\notin\conv\{v_0,\ldots,v_j\}.
\]
Then
\[
 \sum_{i=0}^j
 \mathcal R[v_0,\ldots,\widehat{v_i},\ldots,v_j]=0.
\]
\end{lemma}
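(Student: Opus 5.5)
Pass to the affine model and reduce to the generic fiber count of Lemma~\ref{lem:generic-degenerate-fiber}, checked on a common refinement through the coefficient criterion. Let \(\mathcal C=\pos\{v_0,\ldots,v_j\}\); it is pointed, so Lemma~\ref{lem:spherical-affine-model} supplies \(\ell\), and we set \(w_i=v_i/\ell(v_i)\in H_\ell\). By Lemma~\ref{lem:positive-cone-section}, each facet term \(\mathcal R[v_0,\ldots,\widehat{v_i},\ldots,v_j]\) is nonzero exactly when \(\{w_k:k\ne i\}\) is affinely independent. In that case it equals \([\kappa_\ell(F_i)]\) with \(F_i=\conv\{w_k:k\ne i\}\).

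First I handle the degenerate dimension. Let \(m=\dim\conv\{w_0,\ldots,w_j\}\le j-1\). If \(m<j-1\), every \(F_i\) has dimension \(\le j-2\), so every term vanishes and the sum is \(0\). So assume \(m=j-1\), which is exactly the hypothesis of Lemma~\ref{lem:generic-degenerate-fiber}. The surviving terms are precisely those with \(\dim F_i=j-1\).

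Next I show the sum vanishes. Let \(z=\sum_{i:\dim F_i=j-1}\langle\kappa_\ell(F_i)\rangle\), a free \((j-1)\)-chain. By Lemma~\ref{lem:coefficient-criterion} it suffices to choose a common refinement \(\mathcal K\) of the polytopes \(\kappa_\ell(F_i)\) and show \(\mu_z(Q)=0\) for every \((j-1)\)-dimensional \(Q\in\mathcal K\). All \(\kappa_\ell(F_i)\) lie in the sphere of the \(j\)-dimensional span of \(\mathcal C\). Fix such a \(Q\) and pick a point \(x\in\relint Q\). Since \(E\) is a finite union of images of simplices of dimension \(\le j-2\), the set \(\kappa_\ell(E)\) is nowhere dense in \(Q\). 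I would also avoid the relative boundaries of all the \(\kappa_\ell(F_i)\), a finite union of lower-dimensional sets, so I choose \(x\) off \(\kappa_\ell(E)\) and off these boundaries. Then \(Q\subseteq\kappa_\ell(F_i)\) if and only if \(x\in\kappa_\ell(F_i)\). The forward direction is trivial. For the converse, if \(x\) lies in \(\kappa_\ell(F_i)\) and avoids its boundary, then \(x\in\relint\kappa_\ell(F_i)\cap\relint Q\), and Lemma~\ref{lem:refinement-membership}(i) gives \(Q\subseteq\kappa_\ell(F_i)\). Hence
\[
\mu_z(Q)=\#\{i:\pi_\ell(x)\in F_i,\ \dim F_i=j-1\},
\]
and \(y=\pi_\ell(x)\notin E\) lies in \(\operatorname{aff}\{w_i\}\). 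Lemma~\ref{lem:generic-degenerate-fiber} says this count is \(0\) in \(\F\). So \(\mu_z\equiv0=\mu_0\), and the sum is zero.

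The main obstacle is the genericity bookkeeping: \(x\) must avoid both \(\kappa_\ell(E)\) and every boundary of the \(\kappa_\ell(F_i)\), so that containment of \(Q\) is detected by a single point. This needs that a finite union of sets of dimension \(\le j-2\) cannot cover an open subset of the \((j-1)\)-dimensional \(\relint Q\). That is standard, but it should be stated explicitly, transported through the homeomorphism \(\pi_\ell\) of Lemma~\ref{lem:spherical-affine-model}.
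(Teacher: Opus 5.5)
Your proposal is correct and is essentially the paper's argument. It uses the same steps: the affine model via \(\ell\), disposal of the case \(\dim\conv\{w_i\}\le j-2\), a common refinement checked through the coefficient criterion, a generic point off \(E\) in each top-dimensional cell, Lemma~\ref{lem:refinement-membership}(i) to turn containment into a point count, and Lemma~\ref{lem:generic-degenerate-fiber} for the parity.

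Two small remarks. Your extra avoidance of the relative boundaries of the \(\kappa_\ell(F_i)\) is harmless but redundant: each \((j-1)\)-dimensional \(F_i\) is a simplex with \(\partial F_i\subseteq E\), which is how the paper gets \(y\in\relint F_i\). You should also first discard the cells \(Q\) lying in no \(\kappa_\ell(F_i)\), as the paper does, since only for the remaining cells is \(\pi_\ell(x)\) guaranteed to lie in \(\operatorname{aff}\{w_0,\ldots,w_j\}\).
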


\begin{proof}
The cone
\[
 \mathcal C=\pos\{v_0,\ldots,v_j\}
\]
is pointed by the convex-hull hypothesis.  Let
\(\ell,\kappa_\ell\) be defined as in
Lemma~\ref{lem:spherical-affine-model}.  Put
\[
 w_i=\frac{v_i}{\ell(v_i)}.
\]
Lemma~\ref{lem:positive-cone-section} shows that
\(\{w_0,\ldots,w_j\}\) is affinely dependent.  If
\[
 \dim\conv\{w_0,\ldots,w_j\}\leq j-2,
\]
then every \(j\)-element subfamily of the \(w_i\) is affinely
dependent.  The same lemma shows that every radial chain in the
asserted sum is zero.

It remains to consider
\(\dim\conv\{w_0,\ldots,w_j\}=j-1\).  Use the notation
\(f,\Delta^j,F_i,E\) from
Lemma~\ref{lem:generic-degenerate-fiber}, and define free chains
\[
 \widehat P_i=
 \begin{cases}
  \langle\kappa_\ell(F_i)\rangle,&\dim F_i=j-1,\\
  0,&\dim F_i\leq j-2,
 \end{cases}
 \qquad
 P_i=\varpi_{j-1}(\widehat P_i).
\]
By Lemma~\ref{lem:positive-cone-section},
\[
 P_i=\mathcal R[v_0,\ldots,\widehat{v_i},\ldots,v_j].
\]

Choose a common refinement \(\mathcal K\) of the spherical polytopes
occurring in the nonzero \(\widehat P_i\).  Let
\(Q\in\mathcal K\) have dimension \(j-1\).  If \(Q\) is contained in
none of these polytopes, then
\(\mu_{\sum_i\widehat P_i}(Q)=0\).  We may therefore assume that
\(Q\) is contained in at least one of them.  Then
\(\kappa_\ell^{-1}(\relint Q)\) lies in
\(\operatorname{aff}\{w_0,\ldots,w_j\}\).  This relative interior has
dimension \(j-1\), whereas \(E\) is a finite union of polytopes of
dimension at most \(j-2\).  Choose
\[
 y\in\kappa_\ell^{-1}(\relint Q)\setminus E,
 \qquad
 z=\kappa_\ell(y)\in\relint Q.
\]
Suppose \(y\in F_i\).  By
\eqref{eq:low-dimensional-facet-image}, \(y\notin E\) forces
\(\dim F_i=j-1\).  Thus \(F_i\) is a \((j-1)\)-simplex and
\[
 \partial F_i\subseteq f((\Delta^j)^{(j-2)})=E.
\]
Hence \(y\in\relint F_i\).
Lemma~\ref{lem:positive-cone-section}, applied with
\(I=\{0,\ldots,j\}\setminus\{i\}\), gives
\(z=\kappa_\ell(y)\in\relint\kappa_\ell(F_i)\).
Since \(\mathcal K\) subdivides
\(\kappa_\ell(F_i)\), Lemma~\ref{lem:refinement-membership}(i) gives
\[
 Q\subseteq\kappa_\ell(F_i)
 \quad\Longleftrightarrow\quad
 y\in F_i.
\]
The reverse implication in this display is the lemma; the forward
implication is immediate from \(y=\kappa_\ell^{-1}(z)\).

It follows from Lemma~\ref{lem:generic-degenerate-fiber} that
\[
 \mu_{\sum_i\widehat P_i}(Q)
 =
 \#\{i:y\in F_i,\ \dim F_i=j-1\}=0
 \quad\text{in }\F.
\]
The coefficient criterion gives
\[
 \sum_iP_i=\varpi_{j-1}\!\left(\sum_i\widehat P_i\right)=0,
\]
which is the desired cancellation.
\end{proof}

\begin{corollary}
\label{coro:radial-simplex-identities}
Let \(j\geq0\), and let
\(v_0,\ldots,v_j\in W_K\setminus\{0\}\) satisfy
\[
 0\notin\conv\{v_0,\ldots,v_j\}.
\]
If \(j\geq1\), then
\begin{equation}\label{eq:radial-simplex-boundary}
 \partial_j^{\mathrm{poly}}\mathcal R[v_0,\ldots,v_j]
 =\sum_{i=0}^j
   \mathcal R[v_0,\ldots,\widehat{v_i},\ldots,v_j].
\end{equation}
In every degree \(j\geq0\),
\begin{equation}\label{eq:radial-antipode}
 (A_{W_K})_{\#,j}\mathcal R[v_0,\ldots,v_j]
 =\mathcal R[-v_0,\ldots,-v_j].
\end{equation}
\end{corollary}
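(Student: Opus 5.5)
The plan is to split \eqref{eq:radial-simplex-boundary} into the independent and dependent cases, and to verify \eqref{eq:radial-antipode} directly from the definitions. Throughout, the sub-families \(\{v_i:i\neq r\}\) still satisfy the convex-hull hypothesis, since their convex hull lies inside \(\conv\{v_0,\ldots,v_j\}\). Hence every radial chain on the right-hand side of \eqref{eq:radial-simplex-boundary} is well defined. The same holds for \(-v_0,\ldots,-v_j\): we have \(\conv\{-v_i\}=-\conv\{v_i\}\), which does not contain \(0\).

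\emph{Dependent case.} Suppose \(v_0,\ldots,v_j\) are linearly dependent. Then the left side of \eqref{eq:radial-simplex-boundary} is \(\partial_j^{\mathrm{poly}}(0)=0\). The right side vanishes by Lemma~\ref{lem:degenerate-radial-cancellation}, and this is the only place where real work is needed. That work, however, was already done in Lemma~\ref{lem:degenerate-radial-cancellation}.

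\emph{Independent case.} Suppose the \(v_i\) are linearly independent, and let \(\mathcal C=\pos\{v_0,\ldots,v_j\}\). By Lemma~\ref{lem:positive-cone-section}, the points \(w_i=v_i/\ell(v_i)\) are affinely independent and \(P_\ell=\conv\{w_0,\ldots,w_j\}\) is a \(j\)-simplex. Its facets are exactly the sets \(\conv\{w_k:k\neq i\}\), and these are pairwise distinct because \(j\geq1\). By Lemma~\ref{lem:spherical-affine-model}, the face-poset isomorphism \(\kappa_\ell\) carries these facets bijectively to the facets of \(P=S(W_K)\cap\mathcal C\). Lemma~\ref{lem:positive-cone-section} then identifies each such facet with \(S(W_K)\cap\pos\{v_k:k\neq i\}\). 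Each subfamily \(\{v_k:k\ne i\}\) is itself independent, so its spherical section is precisely \(\mathcal R[v_0,\ldots,\widehat{v_i},\ldots,v_j]\). By definition \(\partial_j^{\mathrm{poly}}[P]=\sum_{F\lessdot P}[F]\), which gives \eqref{eq:radial-simplex-boundary}. The only care needed is to use the facet correspondence through the affine model, rather than arguing directly on the sphere.

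\emph{Antipode.} For \eqref{eq:radial-antipode}, note first that linear dependence of \(-v_0,\ldots,-v_j\) is equivalent to that of \(v_0,\ldots,v_j\). In the dependent case both sides are therefore \(0\), since \((A_{W_K})_{\#,j}\) is linear. In the independent case we have \(\pos\{-v_i\}=-\pos\{v_i\}\) and hence \(S(W_K)\cap\pos\{-v_i\}=-(S(W_K)\cap\pos\{v_i\})\). The defining formula \((A_{W_K})_{\#,j}[P]=[-P]\) then gives the identity, and this works in every degree including \(j=0\).
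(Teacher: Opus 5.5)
Your proposal is correct and follows the paper's proof essentially step for step. The independent case goes through the affine model and Lemma~\ref{lem:positive-cone-section}, matching facets with the omitted-generator subfamilies; the dependent case uses Lemma~\ref{lem:degenerate-radial-cancellation}; and the antipode identity comes from \(-\pos\{v_i\}=\pos\{-v_i\}\). You also check that the subfamilies and the negated family still satisfy the convex-hull hypothesis, and you treat the dependent case of the antipode identity explicitly, both of which the paper leaves implicit.
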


\begin{proof}
Equation~\eqref{eq:radial-antipode} follows directly from
\(-\pos\{v_i\}=\pos\{-v_i\}\).  For the boundary identity, suppose
first that the \(v_i\) are linearly independent.  Put
\(\mathcal C=\pos\{v_0,\ldots,v_j\}\) and set
\(w_i=v_i/\ell(v_i)\).  By
Lemmas~\ref{lem:spherical-affine-model}
and~\ref{lem:positive-cone-section}, the spherical section is
face-preservingly homeomorphic to the simplex
\(\conv\{w_0,\ldots,w_j\}\).  Its facets therefore correspond to the
subfamilies obtained by omitting one generator.
Proposition~\ref{prop:polyhedral-boundary} gives
\eqref{eq:radial-simplex-boundary}.  If the \(v_i\) are
dependent, the left-hand side is zero by definition and the right-hand
side is zero by
Lemma~\ref{lem:degenerate-radial-cancellation}.
\end{proof}

\subsection{The rook polyhedral chain}

For \(0\leq j\leq K-1\), define an \(\F\)-linear map
\[
 \rho_j:
 C_j^{\mathrm{simp}}(\mathcal T_K;\F)
 \longrightarrow C_j^{\mathrm{poly}}(S(W_K);\F)
\]
as follows.  For
\(\sigma=[x_0,\ldots,x_j]\in\mathcal T_K\), put
\[
 v_i=\operatorname{root}(q(x_i))\qquad(0\leq i\leq j)
\]
and set
\[
 \rho_j(\sigma)=
 \begin{cases}
  \mathcal R[v_0,\ldots,v_j],&
      0\notin\conv\{v_0,\ldots,v_j\},\\
 0,&0\in\conv\{v_0,\ldots,v_j\}.
 \end{cases}
\]
Both cases are invariant under permutations of the vertices of
\(\sigma\), so \(\rho_j\) is well defined on mod--\(2\) simplicial
chains.  We do not claim that the maps \(\rho_j\) form a chain map on
all of \(C_*^{\mathrm{simp}}(\mathcal T_K;\F)\); the boundary identity
will be proved only on the maximal gallery simplices used by
\(\mathfrak F_*\).

Define the composite \(\Theta_j\) and record the source and target of
each step as follows:
\begin{equation}\label{eq:rook-polyhedral-composition}
 C_j^{\cell}(\partial_{\mathrm{top}}I^K;\F)
 \xrightarrow{\ \mathfrak F_j\ }
 C_j^{\mathrm{simp}}(\mathcal T_K;\F)
 \xrightarrow{\ \rho_j\ }
 C_j^{\mathrm{poly}}(S(W_K);\F),
 \qquad
 \Theta_j=\rho_j\circ\mathfrak F_j.
\end{equation}
Thus, for a cubical \(j\)-face \(F\),
\[
 \Theta_j(F)=\rho_j(\mathfrak F_j(F)).
\]

\begin{lemma}
\label{lem:rook-polyhedral-boundary}
Let \(1\leq j\leq K-1\), let \(F\) be a cubical \(j\)-face, and let
\(\sigma\) be a maximal Freudenthal \(j\)-simplex of \(F\).  Then
\begin{equation}\label{eq:R-chain-map}
 \partial_j^{\mathrm{poly}}\rho_j(\sigma)
 =\rho_{j-1}(\partial_j^{\mathrm{simp}}\sigma).
\end{equation}
Consequently, every cubical \(j\)-face \(F\) satisfies
\begin{equation}\label{eq:theta-boundary}
 \partial_j^{\mathrm{poly}}\Theta_j(F)
 =\Theta_{j-1}(\partial_j^{\cell}F).
\end{equation}
\end{lemma}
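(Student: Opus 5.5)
Let \(\sigma=[x_0,\ldots,x_j]\) be a maximal Freudenthal simplex of \(F\), with \(x_0,\ldots,x_j\) a monotone cube path.  Put \(v_i=\operatorname{root}(q(x_i))\).  Consecutive \(x_i,x_{i+1}\) are cube-adjacent, so by the rook condition \((q(x_i))_{i=0}^j\) is a rook gallery.  I would split according to whether \(0\in\conv\{v_0,\ldots,v_j\}\).

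\emph{Case 1: \(0\notin\conv\{v_0,\ldots,v_j\}\).}  Then \(0\) lies in no convex hull of a subfamily either, so every facet term of \(\rho_{j-1}(\partial_j^{\mathrm{simp}}\sigma)\) is a radial chain \(\mathcal R[v_0,\ldots,\widehat{v_i},\ldots,v_j]\).  Equation~\eqref{eq:R-chain-map} is then exactly \eqref{eq:radial-simplex-boundary} of Corollary~\ref{coro:radial-simplex-identities}.  That corollary covers both the independent and the dependent subcase, the latter through Lemma~\ref{lem:degenerate-radial-cancellation}.

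\emph{Case 2: \(0\in\conv\{v_0,\ldots,v_j\}\).}  Here the left side is \(0\) by definition, and I must show the right side vanishes.  Lemma~\ref{lem:gallery-rank}, applied with \(m=j+1\), gives \(\rank\{v_0,\ldots,v_j\}\leq j-1\).  Deleting one vector leaves \(j\) vectors of rank at most \(j-1\), so every facet family is linearly dependent.  Each facet term is therefore either \(0\) because its convex hull contains \(0\), or a radial chain of dependent vectors, which is \(0\) by definition.  Hence \(\rho_{j-1}(\partial_j^{\mathrm{simp}}\sigma)=0\).  Note that the facets of \(\sigma\) are not themselves galleries, since deleting an interior \(x_r\) breaks adjacency.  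This is exactly why the rank bound is taken on the full gallery before deletion.  I expect this step to be the crux; the rest is bookkeeping.

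\emph{Consequence.}  Summing \eqref{eq:R-chain-map} over \(\pi\in\mathfrak S_j\) and using linearity gives
\[
 \partial^{\mathrm{poly}}_j\Theta_j(F)=\rho_{j-1}\bigl(\partial_j^{\mathrm{simp}}\mathfrak F_j(F)\bigr).
\]
By \eqref{eq:freudenthal-boundary} this equals \(\rho_{j-1}(\mathfrak F_{j-1}\partial^{\cell}_jF)=\Theta_{j-1}(\partial_j^{\cell}F)\), which is \eqref{eq:theta-boundary}.  The cancellation inside \eqref{eq:freudenthal-boundary} happens in simplicial chains before \(\rho_{j-1}\) is applied, so the fact that \(\rho\) is not a chain map on uncontrolled simplices does no harm.
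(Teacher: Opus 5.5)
Your proposal is correct and follows the paper's proof essentially step for step. It uses the same split on whether \(0\in\conv\{v_0,\ldots,v_j\}\), Corollary~\ref{coro:radial-simplex-identities} in the first case, the rook-gallery rank bound \(\rank\le j-1\) on the full gallery to make every facet term vanish in the second, and summation over \(\pi\) via \eqref{eq:freudenthal-boundary} for the consequence. The paper also notes that the second case is vacuous when \(j=1\), but your deduction is still valid there, so nothing is missing.
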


\begin{proof}
The vertices of \(\sigma\) form a monotone cube path, so their
labels form a rook gallery.
If its root convex hull avoids the origin,
\eqref{eq:R-chain-map} is exactly
\eqref{eq:radial-simplex-boundary}.  Suppose the convex hull contains the
origin.  Then \(\rho_j(\sigma)=0\), so the left-hand side is zero.
With \(m=j+1\), Corollary~\ref{lem:gallery-rank} gives root rank at most
\(j-1\).  When \(j=1\), this is impossible because every root is
nonzero.  Assume therefore that \(j\geq2\).  A facet has \(j\) roots.
If its convex hull contains the origin, its \(\rho_{j-1}\)-chain is
zero by definition; otherwise those \(j\) roots are linearly dependent,
since their rank is at most \(j-1\), so their radial chain is again zero
by definition.  Thus both sides of
\eqref{eq:R-chain-map} vanish.

Summing \eqref{eq:R-chain-map} over the maximal simplices in
\eqref{eq:freudenthal-chain} and using
\eqref{eq:freudenthal-boundary} gives the consequence.
\end{proof}

\begin{proposition}\label{prop:polyhedral-rook-chain}
The maps \(\Theta_j\) in
\eqref{eq:rook-polyhedral-composition}, extended by zero outside the
dimensions of the source, form an equivariant,
augmentation-preserving chain map
\[
 \Theta_*:
 C_*^{\cell}(\partial_{\mathrm{top}}I^K;\F)
 \longrightarrow
 C_*^{\mathrm{poly}}(S(W_K);\F).
\]
\end{proposition}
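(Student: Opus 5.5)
The proof checks three things separately: the chain-map identity, equivariance, and preservation of augmentation.

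\emph{Chain map.} The source $C_*^{\cell}(\partial_{\mathrm{top}}I^K;\F)$ has cells only in dimensions $0\le j\le K-1$, and every such cell is a proper cubical face. So $\Theta_j$ is defined by \eqref{eq:rook-polyhedral-composition} exactly in the range $0\le j\le K-1$, where $\rho_j$ was defined. For $1\le j\le K-1$, linearity and \eqref{eq:theta-boundary} of Lemma~\ref{lem:rook-polyhedral-boundary}, applied to each cubical face $F$, give $\partial^{\mathrm{poly}}_j\Theta_j=\Theta_{j-1}\partial^{\cell}_j$. In degree $0$ both sides are zero because $\partial_0=0$. Outside the source dimensions $\Theta$ is extended by zero, so the relation holds trivially there. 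The target vanishes above $d=K-2$, so $\Theta_{K-1}=0$ automatically. This causes no difficulty, since $\partial\Theta_{K-1}=0$ must then equal $\Theta_{K-2}\partial_{K-1}$, and that is exactly what the lemma gives.

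\emph{Equivariance.} For a cubical face $F$, \eqref{eq:freudenthal-equivariance} gives $\mathfrak F_j(A_KF)=(A_K)_\#\mathfrak F_j(F)$. It therefore suffices to show $\rho_j((A_K)_\#\sigma)=(A_{W_K})_{\#,j}\rho_j(\sigma)$ for each simplex $\sigma=[x_0,\ldots,x_j]$. Since $q(A_Kx)=\tau q(x)$ and $\operatorname{root}(\tau(a,b))=e_b-e_a=-\operatorname{root}(a,b)$, the roots of $A_K\sigma$ are $-v_0,\ldots,-v_j$. The condition $0\in\conv\{v_i\}$ is invariant under negation, so both sides are zero in that case. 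Otherwise \eqref{eq:radial-antipode} gives $\mathcal R[-v_0,\ldots,-v_j]=(A_{W_K})_\#\mathcal R[v_0,\ldots,v_j]$.

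\emph{Augmentation.} For a vertex $x$, $\mathfrak F_0(x)=[x]$. The single root $v_0=\operatorname{root}(q(x))$ is nonzero, so $0\notin\conv\{v_0\}$, and a single nonzero vector is linearly independent. Hence $\Theta_0(x)=[\,v_0/\|v_0\|\,]$ is one point, with $\epsilon_{\mathrm{poly}}=1$. This matches the cellular augmentation, which sends every vertex to $1$.

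The only delicate point is the bookkeeping in top degree: one must confirm that Lemma~\ref{lem:rook-polyhedral-boundary} covers $j=K-1$ even though $C_{K-1}^{\mathrm{poly}}=0$. The lemma is stated for $1\le j\le K-1$, so it covers this case, and everything else is immediate from the earlier results.
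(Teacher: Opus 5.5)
Your proposal is correct and follows the paper's proof essentially step for step. The chain-map identity comes from \eqref{eq:theta-boundary} in every positive degree, including the top source degree $K-1$, where $C_{K-1}^{\mathrm{poly}}=0$ and the identity reduces to $\Theta_{K-2}\partial_{K-1}^{\cell}=0$. Augmentation holds because $\Theta_0(x)$ is the class of a single point, and equivariance follows from $\operatorname{root}(\tau(a,b))=-\operatorname{root}(a,b)$, the negation-invariance of the condition $0\in\conv\{v_i\}$, \eqref{eq:radial-antipode}, and \eqref{eq:freudenthal-equivariance}. The one cosmetic difference is that the paper explicitly invokes permutation invariance of radial chains here, since the antipode reverses the monotone vertex order; in your version this is absorbed into the already-established well-definedness of $\rho_j$ on unordered simplices.
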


\begin{proof}
Equation~\eqref{eq:theta-boundary} proves the chain-map identity in
every positive degree, including the top source degree
\(d+1=K-1\).  In that degree,
\[
 C_{d+1}^{\mathrm{poly}}(S(W_K);\F)=0,
 \qquad
 \Theta_{d+1}=0,
\]
so \eqref{eq:theta-boundary} reads
\[
 \Theta_d\partial_{d+1}^{\cell}=0.
\]

If \(x\) is a source vertex and \(q(x)=(a,b)\), then
\[
 \Theta_0(x)
 =\left[\left\{\frac{e_a-e_b}{\|e_a-e_b\|}\right\}\right].
\]
Thus \(\epsilon_{\mathrm{poly}}\Theta_0(x)=1\), proving preservation
of the augmentation.

Finally,
\[
 \operatorname{root}(q(A_Kx))
 =\operatorname{root}(\tau q(x))
 =-\operatorname{root}(q(x)).
\]
Negation preserves the condition that a root convex hull contain the
origin.  On the complementary case, the radial antipode identity
\eqref{eq:radial-antipode}, together with invariance of radial chains
under permutations of their entries, shows that \(\rho_*\) intertwines
the two antipodal involutions.  Combining this with
\eqref{eq:freudenthal-equivariance} proves
\[
 \Theta_j(A_KF)=(A_{W_K})_{\#,j}\Theta_j(F)
\]
for every cubical face \(F\).
\end{proof}

\begin{proof}[Proof of Theorem~\ref{thm:diagonal-rook}]
Suppose that a rook labeling \(q:V(Q_K)\to\Omega_K\) exists.  Put
\(d=K-2\).  Proposition~\ref{prop:polyhedral-rook-chain} gives an
equivariant, augmentation-preserving chain map
\[
 C_*^{\cell}(\partial_{\mathrm{top}}I^K;\F)
 \longrightarrow C_*^{\mathrm{poly}}(S(W_K);\F).
\]

The source is the cellular chain complex of
\[
 \partial_{\mathrm{top}}I^K\cong S^{K-1}=S^{d+1}.
\]
Its standard augmentation satisfies
\[
 \operatorname{im}\epsilon_{\partial_{\mathrm{top}}I^K}=\F,
 \qquad
 \ker\epsilon_{\partial_{\mathrm{top}}I^K}
 =\operatorname{im}\partial_1^{\cell},
\]
and cellular homology gives
\[
 \ker\partial_i^{\cell}
 =\operatorname{im}\partial_{i+1}^{\cell}
 \qquad(1\leq i\leq d).
\]
By Proposition~\ref{prop:polyhedral-augmentation}, the target is an
augmented chain complex and the antipodal map induces a chain
involution preserving its augmentation.  It has no chains above
degree \(d\), and
Lemma~\ref{lem:polyhedral-antipodal-exactness} gives
\[
 \ker\bigl(\id+(A_{W_K})_{\#,j}\bigr)
 =\operatorname{im}\bigl(\id+(A_{W_K})_{\#,j}\bigr)
 \qquad(0\leq j\leq d).
\]
When \(K=2\), one has \(d=0\), so the positive-degree exactness
conditions on the source are empty.  Thus, in every case, all
hypotheses of Lemma~\ref{lem:dold-chain} are satisfied.
That lemma forbids the displayed chain map, a contradiction.
\end{proof}

\section*{Acknowledgements}

This work is supported by the National Natural Science Foundation of China
under Grants 12371343 and 12525110.

The first author also thanks the Shanghai Institute for Mathematics and
Interdisciplinary Sciences (SIMIS), China, for its financial support. This research
was partly funded by SIMIS under Grant No.~SIMIS-ID-2024-WE. The first author is
grateful for the resources and facilities provided by SIMIS, which were essential
for the completion of this work.

\vspace{1em}
{\bfseries Statement of AI Use.} 
The central proof idea in this manuscript was first generated with the assistance of GPT 5.6 Sol Ultra. The manuscript was drafted with the assistance of Codex (using GPT 5.6 Sol), and subsequently revised and approved by the authors.


\begin{thebibliography}{99}

\bibitem{Bjorner}
A.~Bj\"orner,
Topological methods,
in \emph{Handbook of Combinatorics}, Vols.~1 and~2,
R.~L.~Graham, M.~Gr\"otschel, and L.~Lov\'asz, eds.,
Elsevier, Amsterdam, 1995, 1819--1872.

\bibitem{Borsuk}
K.~Borsuk,
Drei S\"atze \"uber die \(n\)-dimensionale euklidische Sph\"are,
\emph{Fundamenta Mathematicae} \textbf{20} (1933), 177--190.
\href{https://doi.org/10.4064/fm-20-1-177-190}
{doi:10.4064/fm-20-1-177-190}.

\bibitem{Dold}
A.~Dold,
Simple proofs of some Borsuk--Ulam results,
in \emph{Proceedings of the Northwestern Homotopy Theory Conference
(Evanston, Illinois, 1982)},
Contemporary Mathematics \textbf{19},
American Mathematical Society, 1983, 65--69.
\href{https://doi.org/10.1090/conm/019/711043}
{doi:10.1090/conm/019/711043}.

\bibitem{Dvorak}
V.~Dvo\v{r}\'ak,
A note on Norine's antipodal-colouring conjecture,
\emph{Electronic Journal of Combinatorics} \textbf{27} (2020),
Paper No.~P2.26.
\href{https://doi.org/10.37236/9219}{doi:10.37236/9219}.

\bibitem{Dzavoronok}
A.~D\v{z}avoronok,
Monochromatic paths and a topological approach to Norine's conjecture,
arXiv:2606.04181v2, 2026.
\url{https://arxiv.org/abs/2606.04181}.

\bibitem{EllisEtAl}
D.~Ellis, M.-R.~Ivan, I.~Leader, and J.~M.~Mackay,
Antipodal paths in covers of spheres,
arXiv:2607.13964v1, 2026.
\url{https://arxiv.org/abs/2607.13964}.

\bibitem{FederSubi}
T.~Feder and C.~S.~Subi,
On hypercube labellings and antipodal monochromatic paths,
\emph{Discrete Applied Mathematics} \textbf{161} (2013),
no.~10--11, 1421--1426.
\href{https://doi.org/10.1016/j.dam.2012.12.025}
{doi:10.1016/j.dam.2012.12.025}.

\bibitem{FrankstonScheinerman}
K.~Frankston and D.~Scheinerman,
Proving Norine's conjecture holds for \(n=7\) via SAT solvers,
arXiv:2408.02474v1, 2024.
\url{https://arxiv.org/abs/2408.02474}.

\bibitem{Hatcher}
A.~Hatcher,
\emph{Algebraic Topology},
Cambridge University Press, 2002.
Electronic edition:
\url{https://pi.math.cornell.edu/~hatcher/AT/AT.pdf}.

\bibitem{Hollom}
L.~Hollom,
Hypercube geodesics with few colour changes,
arXiv:2605.20184v3, 2026.
\url{https://arxiv.org/abs/2605.20184}.

\bibitem{KirchwegerEtAl}
M.~Kirchweger, T.~Peitl, B.~Subercaseaux, and S.~Szeider,
From the finite to the infinite: sharper asymptotic bounds on Norin's
conjecture via SAT,
arXiv:2511.08386v1, 2025.
\url{https://arxiv.org/abs/2511.08386}.

\bibitem{LeaderLong}
I.~Leader and E.~Long,
Long geodesics in subgraphs of the cube,
\emph{Discrete Mathematics} \textbf{326} (2014), 29--33.
\href{https://doi.org/10.1016/j.disc.2014.02.013}
{doi:10.1016/j.disc.2014.02.013}.

\bibitem{Lovasz}
L.~Lov\'asz,
Kneser's conjecture, chromatic number, and homotopy,
\emph{Journal of Combinatorial Theory, Series A} \textbf{25} (1978),
no.~3, 319--324.
\href{https://doi.org/10.1016/0097-3165(78)90022-5}
{doi:10.1016/0097-3165(78)90022-5}.

\bibitem{Matousek}
J.~Matou\v{s}ek,
\emph{Using the Borsuk--Ulam Theorem:
Lectures on Topological Methods in Combinatorics and Geometry},
Universitext,
Springer-Verlag, Berlin, 2003.
\href{https://doi.org/10.1007/978-3-540-76649-0}
{doi:10.1007/978-3-540-76649-0}.

\bibitem{Norine}
S.~Norine,
Edge-antipodal colorings of cubes,
\emph{Open Problem Garden}, posted by M.~DeVos, October 6, 2008.
\url{https://www.openproblemgarden.org/op/edge_antipodal_colorings_of_cubes}.

\bibitem{Soltesz}
D.~Solt\'esz,
On the 1-switch conjecture,
\emph{Discrete Mathematics} \textbf{340} (2017),
no.~7, 1749--1756.
\href{https://doi.org/10.1016/j.disc.2016.11.006}
{doi:10.1016/j.disc.2016.11.006}.

\bibitem{WestWise}
D.~B.~West and J.~I.~Wise,
Antipodal edge-colorings of hypercubes,
\emph{Discussiones Mathematicae Graph Theory} \textbf{39} (2019),
no.~1, 271--284.
\href{https://doi.org/10.7151/dmgt.2055}{doi:10.7151/dmgt.2055}.

\bibitem{Ziegler}
G.~M.~Ziegler,
\emph{Lectures on Polytopes},
Graduate Texts in Mathematics, vol.~152,
Springer, 1995.
\href{https://doi.org/10.1007/978-1-4613-8431-1}
{doi:10.1007/978-1-4613-8431-1}.

\bibitem{ZulkoskiEtAl}
E.~Zulkoski, C.~Bright, A.~Heinle, I.~S.~Kotsireas,
K.~Czarnecki, and V.~Ganesh,
Combining SAT solvers with computer algebra systems to verify
combinatorial conjectures,
\emph{Journal of Automated Reasoning} \textbf{58} (2017),
no.~3, 313--339.
\href{https://doi.org/10.1007/s10817-016-9396-y}
{doi:10.1007/s10817-016-9396-y}.

\end{thebibliography}
\end{document}